\def\aliasbib#1#2{\expandafter\xdef
\csname b@#1\endcsname{\csname b@#2\endcsname}}
\newtheorem{lemma}{Lemma}[section]
\newtheorem{theorem}[lemma]{Theorem}
\newtheorem{prop}[lemma]{Proposition}
\newtheorem{claim}{Claim}
\theoremstyle{definition}
\newtheorem{defi}[lemma]{Definition}
\let\Rho\Omega
\def\S{\mathcal{S}}
\def\T{\mathcal{T}}
\def\F{\mathcal{F}}
\let\notto\nrightarrow
\def\bul{^{\bullet}}
\title{Digraph functors which admit both~left~and~right adjoints}
\author{Jan Foniok%
\thanks{Department of Mathematics and Statistics,
Queen's University,
Jeffery Hall,
48 University Avenue,
Kingston ON K7L 3N6,
Canada.
Fully supported by the Swiss National Science Foundation.
Current address:
School of Computing, Mathematics and Digital Technology,
Manchester Metropolitan University,
Chester Street,
Manchester M1~5GD,
England;
j.foniok@mmu.ac.uk.}\and
Claude Tardif%
\thanks{Department of Mathematics and Computer Science,
Royal Military College of Canada,
PO Box 17000, Station ``Forces'',
Kingston ON K7K 7B4,
Canada;
Claude.Tardif@rmc.ca.
Supported by grants from NSERC and ARP.}}
\begin{document}


\date{12 August 2014}
\maketitle

\begin{abstract}
For our purposes, two functors $\Lambda$ and $\Gamma$ are said to be 
adjoint if for any digraphs
$G$ and~$H$, there exists a homomorphism of~$\Lambda(G)$ to~$H$
if and only if there exists a homomorphism of~$G$ to~$\Gamma(H)$.
We investigate the right adjoints characterised by Pultr in
[A.~Pultr, The right adjoints into the categories of relational systems,
 In {\em Reports of the Midwest Category Seminar, IV}, volume 137 of
{\em Lecture Notes in Mathematics}, pages 100--113, Berlin, 1970].
We find necessary conditions for these functors to admit right adjoints themselves.
We give many examples where these necessary conditions are satisfied,
and the right adjoint indeed exists. Finally, we discuss a connection between
these right adjoints and homomorphism dualities.

\bigskip
\noindent
AMS subject classification: \textbf{05C20}, 18A40, 05C60
\end{abstract}

\section{Introduction}

A {\em digraph functor} is a construction~$\Gamma$ which makes a
digraph~$\Gamma(G)$ out of a digraph~$G$, such that if there exists
a homomorphism of~$G$ to~$H$, then there exists a homomorphism
of~$\Gamma(G)$ to~$\Gamma(H)$.
Consider two functors $\Lambda$ and~$\Gamma$.
We say that $\Lambda$~is a {\em left adjoint} of~$\Gamma$ and
$\Gamma$~is a {\em right adjoint} of~$\Lambda$ if the existence
of a homomorphism of~$\Lambda(G)$ to~$H$  is equivalent to the
existence of a homomorphism of~$G$ to~$\Gamma(H)$.
Note that the precise categorial definition requires a natural
correspondence between the morphisms of~$\Lambda(G)$ to~$H$ and
those of~$G$ to~$\Gamma(H)$, but for our applications it is usually
enough to distinguish between the existence and non-existence of a
homomorphism between two digraphs.
Hence we work in the ``thin'' category of digraphs, in which there
is at most one generic morphism from one digraph to another.

Some significant constructions in graph theory turn out to be functors, and 
sometimes some of their fundamental properties are related to the fact that
they have a left adjoint, a right adjoint, and sometimes both. Thus it is 
worth the while to characterise the pairs of adjoint functors. However, that
objective may be out of reach for the moment. Our purpose is to lay groundwork
in that direction.

In a sense, Pultr~\cite{Pul:The-right-adjoints} has already characterised the
pairs of adjoint functors. Nevertheless, his characterisation holds in the 
category of multidigraphs, where morphisms must specify images of vertices 
and of arcs. Right-left adjunction is preserved in the thin category 
of digraphs; however, more pairs of adjoint functors exist. In particular,
some of the right adjoint functors of Pultr themselves admit right adjoints,
while no such adjoints exist in the category of multidigraphs.

Some of the adjoints characterised by Pultr are in fact well-known
constructions in graph theory.  The left adjoints of Pultr encompass
arc subdivisions and standard products, and his right adjoints
encompass the shift graph construction and exponentiation.

We will call the left and right functors characterised by Pultr respectively
{\em left Pultr functors} and {\em central Pultr functors}. One feasible
objective seems to be the characterisation of the central Pultr functors
which admit right adjoints. In Theorem~\ref{thm:tree}, we prove necessary
conditions. In Sections~\ref{exa:arc} to~\ref{sec:tree}, we show that some of the functors
satisfying these conditions do indeed admit right adjoints, which leads us to wonder
whether these conditions are also sufficient.

The existence of central Pultr functors with right adjoints allows us to define
new pairs of adjoint functors by composition. It is not at all clear whether
compositions of Pultr functors suffices to define all pairs of digraph adjoint 
functors.

\section{Pultr templates and functors} \label{ptaf}

A {\em homomorphism} is an arc-preserving map between digraphs.
If $G,H$ are digraphs, we write $G\to H$ if there exists a homomorphism
of~$G$ to~$H$. $G$ and $H$ are called {\em homomorphically equivalent}
if $G\to H$ and $H\to G$. 

\begin{defi} A {\em Pultr template} is a quadruple $\mathcal{T} = (P,Q,\epsilon_1,\epsilon_2)$
where $P$, $Q$ are digraphs and $\epsilon_1, \epsilon_2$ homomorphisms of $P$ to $Q$.
\end{defi}

\begin{defi}
\label{dfn:central}
Given a Pultr template $\mathcal{T} = (P,Q,\epsilon_1,\epsilon_2)$
the {\em central Pultr functor} $\Gamma_{\mathcal{T}}$ is the following construction:
For a digraph $H$, the vertices of $\Gamma_{\mathcal{T}}(H)$ are the homomorphisms
$g : P \rightarrow H$, and the arcs of $\Gamma_{\mathcal{T}}(H)$ are the couples
$(g_1,g_2)$ such that there exists a homomorphism $h : Q \rightarrow H$ with
$g_1 = h \circ \epsilon_1$, $g_2 = h \circ \epsilon_2$. 
\end{defi}

Where $P$ is a set or a digraph and $\epsilon$~is a mapping,
by $\epsilon[P]$ we mean the image of this set or digraph
(where the mapping is applied elemet-wise).

\begin{defi} Given a Pultr template, $\mathcal{T} = (P,Q,\epsilon_1,\epsilon_2)$
the {\em left Pultr functor} $\Lambda_{\mathcal{T}}$ is the following construction:
For a digraph $G$, $\Lambda_{\mathcal{T}}(G)$ contains one copy $P_u$ of $P$
for every vertex $u$ of $G$, and for every arc $(u,v)$ of $G$, 
$\Lambda_{\mathcal{T}}(G)$ contains a copy $Q_{u,v}$ of $Q$ with $\epsilon_1[P]$ identified
with $P_u$ and $\epsilon_2[P]$ identified with $P_v$.
\end{defi}

Formally, the identification of $P$ and~$P'$ is described by defining
an equivalence relation in which the corresponding vertices of~$P$
and~$P'$ are equivalent, and then quotienting the digraph by this
relation; see~\cite{FonTar:Adjoint} for details.

\begin{theorem}[Pultr \cite{pultr}] \label{puthm}
For any Pultr template $\mathcal{T}$, $\Lambda_{\mathcal{T}}$ and
$\Gamma_{\mathcal{T}}$ are left and right adjoints.
\end{theorem}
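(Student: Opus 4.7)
The plan is to exhibit, for each pair of digraphs $G$ and $H$, a correspondence between homomorphisms $\Lambda_{\mathcal{T}}(G)\to H$ and homomorphisms $G\to\Gamma_{\mathcal{T}}(H)$, directly from the constructions.

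\emph{From left to right.} Given a homomorphism $f:\Lambda_{\mathcal{T}}(G)\to H$, I would define $\hat f:G\to\Gamma_{\mathcal{T}}(H)$ by letting $\hat f(u)$ be the composition of $f$ with the inclusion of the copy $P_u\subseteq\Lambda_{\mathcal{T}}(G)$, viewed as a homomorphism $P\to H$; this is a vertex of $\Gamma_{\mathcal{T}}(H)$ by definition. For an arc $(u,v)$ of $G$, restricting $f$ to the copy $Q_{u,v}$ produces a homomorphism $h:Q\to H$; the identification of $\epsilon_1[P]$ with $P_u$ and $\epsilon_2[P]$ with $P_v$ translates exactly into $h\circ\epsilon_1=\hat f(u)$ and $h\circ\epsilon_2=\hat f(v)$, so $(\hat f(u),\hat f(v))$ is an arc of $\Gamma_{\mathcal{T}}(H)$.

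\emph{From right to left.} Given $\varphi:G\to\Gamma_{\mathcal{T}}(H)$, each $\varphi(u)$ is already a homomorphism $P\to H$ and each arc $(u,v)$ of $G$ witnesses the existence of some $h_{u,v}:Q\to H$ with $h_{u,v}\circ\epsilon_1=\varphi(u)$ and $h_{u,v}\circ\epsilon_2=\varphi(v)$; pick one such $h_{u,v}$ for every arc. I would then assemble a map $\check\varphi$ on $\Lambda_{\mathcal{T}}(G)$ by setting $\check\varphi|_{P_u}=\varphi(u)$ and $\check\varphi|_{Q_{u,v}}=h_{u,v}$. The key verification is that this is well defined on the quotient: vertices of $P_u$ identified with vertices of $Q_{u,v}$ via $\epsilon_1$ carry consistent values precisely because $h_{u,v}\circ\epsilon_1=\varphi(u)$, and the symmetric statement holds via $\epsilon_2$. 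Arc preservation is immediate, since every arc of $\Lambda_{\mathcal{T}}(G)$ lies in some $Q_{u,v}$ and $h_{u,v}$ is a homomorphism.

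The two constructions are visibly mutually inverse at the level of existence of homomorphisms, which is all that the thin category requires. The main technical obstacle is the bookkeeping around the quotient defining $\Lambda_{\mathcal{T}}(G)$: one must confirm that the identifications glue the separately defined pieces $\varphi(u)$ and $h_{u,v}$ into a genuine map on the quotient digraph, which is exactly where the compatibility conditions $h_{u,v}\circ\epsilon_i=\varphi(u),\varphi(v)$ come in. Once that check is made, no further ingredient is needed, and the result follows.
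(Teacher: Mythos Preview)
Your argument is correct and is the standard proof of this adjunction. Note, however, that the paper does not actually give its own proof of this theorem: it is stated with a citation to Pultr~\cite{Pul:The-right-adjoints} and no argument, so there is nothing to compare against. Your write-up supplies exactly the expected verification; the only point worth tightening is the first direction, where ``restricting $f$ to the copy $Q_{u,v}$'' should be read as precomposing $f$ with the canonical map $Q\to\Lambda_{\mathcal{T}}(G)$ (since the image of $Q$ in the quotient need not be an isomorphic copy when $\epsilon_1$ or $\epsilon_2$ fails to be injective), but the compatibility $h\circ\epsilon_i=\hat f(u),\hat f(v)$ then follows immediately from the generating identifications of the quotient, just as you use them in the reverse direction.
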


For some templates ${\mathcal{T}}$, the central
Pultr functor $\Gamma_{\mathcal{T}}$ not only admits the left adjoint $\Lambda_{\mathcal{T}}$,
but also a right adjoint $\Omega_{\mathcal{T}}$. Not all templates have this
property. The following result provides necessary conditions.

\begin{theorem}
\label{thm:tree}
Let $\mathcal{T} = (P,Q,\epsilon_1,\epsilon_2)$ be a Pultr template such that 
$\Gamma_{\mathcal{T}}$ admits a right adjoint $\Rho_{\mathcal{T}}$.
Then $P$ and $Q$ are homomorphically equivalent to trees.
Moreover, for any tree $T$, $\Lambda_{\mathcal{T}}(T)$ is homomorphically 
equivalent to a tree.
\end{theorem}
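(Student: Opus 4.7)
My plan is to couple the adjunction chain $\Lambda_{\mathcal{T}}\dashv\Gamma_{\mathcal{T}}\dashv\Rho_{\mathcal{T}}$ with the Nešetřil--Tardif theorem on finite homomorphism dualities: a digraph $F$ admits a finite dual $D$ (in the sense that $F\to G$ iff $G\notto D$) precisely when $F$ is homomorphically equivalent to a disjoint union of oriented trees, and to a single tree when $F$ is connected.

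The first step would be to transport dualities through the two adjunctions. For any tree $T$, Nešetřil--Tardif supplies a dual $D_T$. Composing the adjunctions yields, for every digraph $H$,
\[
\Lambda_{\mathcal{T}}(T)\to H
\;\Longleftrightarrow\;T\to\Gamma_{\mathcal{T}}(H)
\;\Longleftrightarrow\;\Gamma_{\mathcal{T}}(H)\notto D_T
\;\Longleftrightarrow\;H\notto\Rho_{\mathcal{T}}(D_T),
\]
so $(\Lambda_{\mathcal{T}}(T),\Rho_{\mathcal{T}}(D_T))$ is a duality pair. The converse direction of Nešetřil--Tardif then says that $\Lambda_{\mathcal{T}}(T)$ is homomorphically equivalent to a disjoint union of trees.

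To read off the claims about $P$ and $Q$, I would specialise $T$. With $T=K_1$, the construction gives $\Lambda_{\mathcal{T}}(K_1)=P$ directly, so $P$ is equivalent to a forest. With $T$ the single-arc tree on vertices $u,v$, $\Lambda_{\mathcal{T}}(T)$ is the digraph obtained from a copy of $Q$ by gluing copies $P_u,P_v$ of $P$ along $\epsilon_1[P]$ and $\epsilon_2[P]$; the inclusion $Q\hookrightarrow\Lambda_{\mathcal{T}}(T)$ and the retraction that folds $P_u,P_v$ back into $Q$ via $\epsilon_1,\epsilon_2$ exhibit $Q$ as homomorphically equivalent to $\Lambda_{\mathcal{T}}(T)$, hence also to a forest.

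The main obstacle will be promoting ``forest'' to ``tree''. To do this I would exploit that any right-adjoint functor preserves coproducts up to homomorphic equivalence. An inspection of $\Gamma_{\mathcal{T}}(H_1+H_2)$ reveals that ``mixed'' homomorphisms $P\to H_1+H_2$, routing different components of $P$ to different summands, produce vertices with no image in $\Gamma_{\mathcal{T}}(H_1)+\Gamma_{\mathcal{T}}(H_2)$ as soon as the core of $P$ has two or more components. Ruling this out by a suitable choice of $H_1,H_2$ forces the core of $P$ to be connected, and the analogous argument at the arc level (using $Q$ and an arc-subdivided coproduct) forces the core of $Q$ to be connected. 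Replacing $P$ and $Q$ by their connected cores does not alter $\Gamma_{\mathcal{T}}$ up to equivalence, and then $\Lambda_{\mathcal{T}}(T)$ for any connected tree $T$ is a connected gluing of connected pieces along $T$, so its core is a single tree, completing the argument.
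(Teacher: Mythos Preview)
Your chain of equivalences
\[
\Lambda_{\mathcal{T}}(T)\to H
\;\Longleftrightarrow\;T\to\Gamma_{\mathcal{T}}(H)
\;\Longleftrightarrow\;\Gamma_{\mathcal{T}}(H)\notto D_T
\;\Longleftrightarrow\;H\notto\Rho_{\mathcal{T}}(D_T)
\]
is exactly the paper's argument, and your specialisations $T=K_1$ and $T=\vec P_1$ recover the paper's choices $H=\emptyset$ and $H=\vec P_0$ (note that $\Lambda_{\mathcal{T}}(K_1)=P$ and $\Lambda_{\mathcal{T}}(\vec P_1)\cong Q$ outright, so the retraction discussion is not even needed). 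So the core of your proof is correct and matches the paper.

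Where you diverge is in your statement of the duality theorem, and this introduces an unnecessary detour. You assert that $F$ admits a dual $D$ (with $F\to G\Leftrightarrow G\notto D$) precisely when $F$ is homomorphically equivalent to a \emph{forest}. That is false: if $T_1,T_2$ are homomorphically incomparable trees (say $\vec P_3$ and the path $0\to1\leftarrow2\to3$), then $F=T_1+T_2$ has no single dual. Indeed, $F\notto T_1$ and $F\notto T_2$ would force $T_1\to D$ and $T_2\to D$, hence $F\to D$, contradicting $D\to D$. The correct statement (Kom\'arek; Ne\v set\v ril--Tardif), which the paper invokes, is that the left member of a single duality pair is homomorphically equivalent to a \emph{tree}. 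With this in hand your chain already yields the full conclusion, and the entire ``promoting forest to tree'' paragraph can be deleted.

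That final paragraph is also the weakest part of your write-up on its own terms: you are right that $\Gamma_{\mathcal{T}}$, having a right adjoint, must preserve coproducts in the thin category, but the claim that ``mixed'' homomorphisms $P\to H_1+H_2$ force $\Gamma_{\mathcal{T}}(H_1+H_2)\not\simeq\Gamma_{\mathcal{T}}(H_1)+\Gamma_{\mathcal{T}}(H_2)$ for a ``suitable choice'' of $H_1,H_2$ is not argued, and the hinted ``arc-subdivided coproduct'' argument for $Q$ is not spelled out at all. Fortunately none of this is needed.
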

\begin{proof}

Suppose that $\Gamma_{\mathcal{T}}$ admits a right adjoint $\Rho_{\mathcal{T}}$.
Let $H = \vec P_0$, the one-vertex digraph with no arcs.
Then a graph $G$ satisfies $\Gamma_{\mathcal{T}}(G) \rightarrow H$ if and only if
$\Gamma_{\mathcal{T}}(G)$ has no arcs, that is, $Q \not \rightarrow G$.
On the other hand, $\Gamma_{\mathcal{T}}(G) \rightarrow H$ if and only if
$G \rightarrow \Omega_{\mathcal{T}}(H)$, thus we have
$$
Q \not \rightarrow G \mbox{ if and only if } G \rightarrow \Omega_{\mathcal{T}}(H).
$$
Therefore $(Q,\Omega_{\mathcal{T}}(H))$ is a homomorphism duality
pair in the sense of~\cite{nestar1}.  It is known (by~\cite{Kom:Phd},
see also~\cite{nestar1}) that a digraph $Q$ is the left-hand member
of such a duality pair if and only if it is homomorphically equivalent
to a tree.

A similar argument (with $H = \emptyset$) shows that $P$ is also homomorphically
equivalent to a tree. More generally, let $T$ be a tree with dual $D(T)$.
Then for any digraph $G$,
\[
T \not \rightarrow \Gamma_{\mathcal{T}}(G) \ \Leftrightarrow \  \Gamma_{\mathcal{T}}(G) \rightarrow D(T),
\]
that is,
\[
\Lambda_{\mathcal{T}}(T) \not \rightarrow G \ \Leftrightarrow \ G \rightarrow \Rho_\mathcal{T}(D(T))
\]
Therefore $(\Lambda_{\mathcal{T}}(T), \Rho_\mathcal{T}(D(T)))$ is a duality pair,
whence $\Lambda_{\mathcal{T}}(T)$ is homomorphically equivalent to a tree.
\end{proof} 

It is not clear how to characterise the templates~$\T$ with the property
that for any tree~$T$, the left adjoint $\Lambda_\T(T)$ is homomorphically equivalent to a tree.
Small examples seem to suggest that templates with this property
have a $Q$ that is itself a tree, unless it is disconnected.
Also, it remains open whether the converse to Theorem~\ref{thm:tree}
holds. In the rest of this paper we establish partial results
in this respect. Our main result is Theorem~\ref{thm:tap} in Section~\ref{sec:tree},
which proves that for templates~$\T$ where $P$ is a vertex or an arc, and $Q$ is a tree,
$\Gamma_{\T}$ has a right adjoint $\Omega_{\T}$.
To facilitate the understanding of this main theorem without resorting to extremely formal definitions,
we begin by several examples.

\section{Example: The arc graph construction} \label{exa:arc}

We write $V(G)$ for the vertex set and $A(G)$ for the arc set of a
digraph~$G$.  If $x, y$ are vertices of~$G$, we sometimes write
$x\to y$ for $(x,y) \in A(G)$ (when there is no confusion about~$G$).
Note that we also write $G \rightarrow H$ for
``there exists a homomorphism of~$G$ to~$H$'',
but this notation is consistent, since the thin category of digraphs 
is itself a digraph. If
$X,Y$ are sets of vertices, we write $X\Rrightarrow Y$ if $x\to y$
for any $x\in X$ and any $y\in Y$. We abbreviate $X\Rrightarrow\{y\}$
to $X\Rrightarrow y$ and $\{x\}\Rrightarrow Y$ to $x\Rrightarrow Y$. 
Note that for any set $X$, we have
$\emptyset\Rrightarrow X \Rrightarrow \emptyset$.

The {\em arc graph} of a digraph $G$ is the digraph $\delta(G)$
constructed as follows: For every arc $x \rightarrow y$ of $G$,
$\delta(G)$ contains the vertex $(x,y)$, and for every pair
of consecutive arcs $x \rightarrow y \rightarrow z$ of $G$,
$\delta(G)$ contains the arc $(x,y) \rightarrow (y,z)$.
The arc graph construction is a well-known method for constructing
graphs with large odd girth and large chromatic number 
(see~\cite{HN:Homomorphisms}).

It turns out that the arc graph construction is a central Pultr functor.
We have $\delta(G) = \Gamma_\T(G)$ where $\T=(P,Q,\epsilon_1,\epsilon_2)$ with
$P=\vec P_1=(\{0,1\},\{(0,1)\})$, $Q=\vec
P_2=(\{0,1,2\},\{(0,1),(1,2)\})$, and $\epsilon_1$, $\epsilon_2$
mapping $P$ to the first and second arc of $Q$. The right adjoint $\Omega_{\T}$ 
of $\Gamma_{\T}=\delta$ exists; we call it $\delta_{R}$.

\begin{defi} 
For a digraph $H$, the vertices of $\delta_R(H)$ are all the pairs 
$(R^-,R^+)$ such that $R^-,R^+ \subseteq V(H)$ and $R^-\Rrightarrow R^+$. 
(Note that $(\emptyset,V(H))$ and $(V(H),\emptyset)$
are vertices of $\delta_R(H)$.) $(R^-,R^+)\to(S^-,S^+)$ is
an arc of $\delta_R(H)$ if and only if $R^+\cap S^-\ne\emptyset$.
\end{defi}

\begin{prop} For any digraphs $G$, $H$,
$\delta(G) \rightarrow H$ if and only if
$G \rightarrow \delta_R(H)$.
\end{prop}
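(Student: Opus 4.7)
The plan is to prove both implications by constructing explicit homomorphisms in each direction, the key idea being that the pair $(R^-, R^+)$ at a vertex should be thought of as ``possible images of arcs coming into, resp.\ going out of, that vertex.''

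For the forward direction, suppose $f : \delta(G) \to H$ is a homomorphism. For each $x \in V(G)$, I would define
\[
\phi(x) = \bigl(R^-_x, R^+_x\bigr) \quad\text{where}\quad
R^-_x = \{f(y,x) : (y,x) \in A(G)\},\
R^+_x = \{f(x,z) : (x,z) \in A(G)\}.
\]
The check that this is a vertex of $\delta_R(H)$ amounts to $R^-_x \Rrightarrow R^+_x$: for $a = f(y,x) \in R^-_x$ and $b = f(x,z) \in R^+_x$, the arc $(y,x) \to (x,z)$ of $\delta(G)$ is mapped by $f$ to $a \to b$. (Empty $R^-$ or $R^+$ cause no trouble.) To see that $\phi$ is a homomorphism $G \to \delta_R(H)$, note that for an arc $(x,y) \in A(G)$, the element $f(x,y)$ lies both in $R^+_x$ (as an outgoing arc at $x$) and in $R^-_y$ (as an incoming arc at $y$), so $R^+_x \cap R^-_y \neq \emptyset$.

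For the backward direction, suppose $g : G \to \delta_R(H)$ with $g(x) = (R^-_x, R^+_x)$. For each arc $(x,y) \in A(G)$, pick any $h(x,y) \in R^+_x \cap R^-_y$, which is nonempty since $g$ is a homomorphism. Then $h$ is defined on $V(\delta(G))$, and to verify it preserves arcs, consider an arc $(x,y) \to (y,z)$ of $\delta(G)$. By construction $h(x,y) \in R^-_y$ and $h(y,z) \in R^+_y$, and since $(R^-_y, R^+_y)$ is a vertex of $\delta_R(H)$ we have $R^-_y \Rrightarrow R^+_y$, hence $h(x,y) \to h(y,z)$ in $H$.

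Neither direction presents a real obstacle; the only point that requires any care is recognising that the ``witness of adjacency'' $R^+_x \cap R^-_y \neq \emptyset$ in $\delta_R(H)$ plays a dual role: in the forward direction it is furnished by the image of the arc $(x,y)$ itself, and in the backward direction it supplies the image of the arc $(x,y)$ under $h$. This symmetry is really the content of the adjunction and is the one step worth highlighting before moving on to the more elaborate constructions in later sections.
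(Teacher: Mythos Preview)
Your proof is correct and follows essentially the same approach as the paper's: the same constructions $R^-_x = \{f(y,x)\}$, $R^+_x = \{f(x,z)\}$ for the forward direction and the choice of $h(x,y) \in R^+_x \cap R^-_y$ for the backward direction, with the same verifications. The only cosmetic difference is that the paper presents the two implications in the opposite order.
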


\begin{proof}
Let $g:G\to\delta_R(H)$ be a homomorphism, with
$g(u) = (g_-(u),g_+(u))$. For any arc $u \to v$ of $G$,
we have $g(u) \to g(v)$, hence $g_+(u)\cap g_-(v) \neq \emptyset$.
Define $f:\delta(G)\to H$ by taking
$f(u,v)$ to be any element of $g_+(u)\cap g_-(v)$. 
Whenever $(x,y)\to(y,z)$
in~$\delta(G)$, we have $f(x,y)\in g_-(y) \Rrightarrow g_+(y) \ni f(y,z)$; hence
$f(x,y)\to f(y,z)$ in~$H$. Therefore $f$ is a homomorphism.

Conversely, let $f:\delta(G)\to H$ be a homomorphism.
For $u\in V(G)$, put
\begin{align*}
g_-(u) &= \{ f(x,u) \colon (x,u) \in A(G)\}, \\
g_+(u) &= \{ f(u,y) \colon (u,y) \in A(G)\}.
\end{align*}
Whenever $x \to u \to y$ in $G$, we have $(x,u)\to (u,y)$
in~$\delta(G)$, so $f(x,u)\to f(u,y)$ in~$H$ because $f$~is a
homomorphism. Thus $g_-(u)\Rrightarrow g_+(u)$ for any vertex~$u$
of~$G$. Hence $g(u) := (g_-(u),g_+(u))$~is a vertex of~$\delta_R(H)$.  Furthermore,
if $u\to v$ in~$G$, then $f(u,v)\in g_+(u)\cap g_-(v)$, and so
$g_+(u)\cap g_-(v)\ne\emptyset$. This shows that $g(u)\to g(v)$
in~$\delta_R(H)$. Therefore, $g: G \rightarrow \delta_R(H)$ defined
by $g(u) =  (g_-(u),g_+(u))$ is a homomorphism.
\end{proof}

Now, consider the template $\T=(P,Q,\epsilon_1,\epsilon_2)$ with
$P=\vec P_1=\bigl(\{0,1\},\{(0,1)\}\bigr)$, 
$Q=\bigl(\{0,1,2,3\},\{(0,1),(2,3),(0,2),(1,3)\}\bigr)$, and 
$\epsilon_1(0) = 0, \epsilon_1(1) = 1$, $\epsilon_2(0) = 2, \epsilon_2(1) = 3$.
For any graph $G$, $\delta(G)$ is a subgraph of $\Gamma_{\T}(G)$,
and $\Gamma_{\T}(G) = \delta(G)$ whenever $G$ does not contain
a $4$-cycle $x \rightarrow y \rightarrow z \leftarrow w \leftarrow x$. 
However, $\Gamma_{\T}$ does not have a right adjoint.
Indeed, even though $Q$ is homomorphically equivalent to a tree,
it is easy to see that for the tree 
$T = \bigl(\{0, \ldots, 5\},\{(0,1),(1,2),(3,2),(3,4),(4,5)\}\bigr)$,
$\Lambda_{\T}(T)$ is not homomorphically equivalent to a tree.
Thus by Theorem~\ref{thm:tree}, $\Gamma_{\T}$ does not have a right adjoint.

\section{Example: A path template}
\label{sec:path}

In \cite{HaT:Graph-powers,Tar:Mul}, (undirected) graph constructions 
are studied, which turn out to be right adjoints of central Pultr functors 
for templates $(P,Q,\epsilon_1,\epsilon_2)$ where $P$ is a point,
$Q$ is a path of odd length and $\epsilon_1,\epsilon_2$ map $P$
to the endpoints of $Q$. Similar constructions work for directed graphs,
we outline one example.

Let $Q$ be the oriented path $0 \leftarrow 1 \to 2 \to 3$. Let $P = \vec P_0 = (\{0\},\emptyset)$,
and $\epsilon_1(0)=0$, $\epsilon_2(0)=3$. For $\T = (P,Q,\epsilon_1,\epsilon_2)$,
a right adjoint $\Omega_{\T}$ of $\Gamma_{\T}$ is constructed as follows.

\begin{defi} 
\label{defi:path}
For a digraph $H$, the vertices of $\Omega_{\T}(H)$ are all the pairs 
$(a,A)$ such that $a \in V(H)$ and $A \subseteq V(H)$.
(Note that each $(a,\emptyset)$ with $a \in V(H)$ is a vertex of~$\Omega_{\T}(H)$.) 
$(a,A)\to(b,B)$ is an arc of $\Omega_{\T}(H)$ if and only if $b \in A \Rrightarrow B$.
\end{defi}

Note that for all $a, b \in V(H)$, $(a,\emptyset)$ is a sink, and $(b,B)$ is a source 
unless $b \Rrightarrow B$.

\begin{prop} \label{prop:rad3p} For any digraphs $G$, $H$,
$\Gamma_{\T}(G) \rightarrow H$ if and only if
$G \rightarrow \Omega_{\T}(H)$.
\end{prop}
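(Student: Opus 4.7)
The plan is to first unpack what $\Gamma_\T(G)$ looks like for this template. Since $P=\vec P_0$ is a single vertex, the vertices of $\Gamma_\T(G)$ are just the vertices of $G$, while an arc $(u,v)$ arises from a homomorphism $h\colon Q\to G$ with $h(0)=u$ and $h(3)=v$; this amounts to the existence of a ``zigzag'' $u\leftarrow w_1\to w_2\to v$ in $G$, with $w_1=h(1)$ and $w_2=h(2)$. With this description, both directions of the equivalence reduce to matching the zigzag structure against Definition~\ref{defi:path}.

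For the forward implication, I would take a homomorphism $g\colon G\to\Omega_\T(H)$ and write $g(u)=(g_0(u),g_A(u))$, then define $f\colon V(G)\to V(H)$ by $f(u)=g_0(u)$. Given an arc $(u,v)$ of $\Gamma_\T(G)$ witnessed by a zigzag $u\leftarrow w_1\to w_2\to v$, each arc of $G$ in the zigzag translates through the arc rule of $\Omega_\T(H)$ to a constraint on $g$: from $w_1\to u$ one gets $g_0(u)\in g_A(w_1)$; from $w_1\to w_2$ one gets $g_A(w_1)\Rrightarrow g_A(w_2)$; from $w_2\to v$ one gets $g_0(v)\in g_A(w_2)$. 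These combine to force $g_0(u)\to g_0(v)$ in $H$, so $f$ is the desired homomorphism of $\Gamma_\T(G)$ into $H$.

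For the converse, given $f\colon \Gamma_\T(G)\to H$ I would define $g(u) = \bigl(f(u),\; \{f(w) : u\to w \text{ in } G\}\bigr)$, which is clearly a vertex of $\Omega_\T(H)$. For an arc $u\to v$ of $G$, the membership $g_0(v)=f(v)\in g_A(u)$ is immediate. The condition $g_A(u)\Rrightarrow g_A(v)$ reduces to showing $f(w)\to f(w')$ in $H$ for every $w,w'$ with $u\to w$ and $v\to w'$; but $w\leftarrow u\to v\to w'$ is itself a zigzag in $G$, so $(w,w')$ is an arc of $\Gamma_\T(G)$ and $f$ being a homomorphism supplies $f(w)\to f(w')$.

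The only real subtlety is choosing the right zigzag in the converse: taking $u$ and $v$ themselves as the middle pair of vertices is exactly what makes the verification go through, and this is also what motivates recording both the image $f(u)$ and the set of images of out-neighbours of $u$ in the second coordinate of $g(u)$. Beyond this, the argument is routine bookkeeping and I do not anticipate any serious obstacle.
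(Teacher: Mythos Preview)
Your proposal is correct and follows essentially the same argument as the paper: you define $f(u)=g_0(u)$ from a given $g\colon G\to\Omega_\T(H)$ and chase the zigzag $u\leftarrow w_1\to w_2\to v$ through the arc rule to obtain $g_0(u)\in g_A(w_1)\Rrightarrow g_A(w_2)\ni g_0(v)$, and conversely define $g(u)=(f(u),\{f(w):u\to w\})$ and verify the arc rule via the zigzag $w\leftarrow u\to v\to w'$, exactly as the paper does. The only cosmetic difference is notation ($w_1,w_2,g_A$ versus the paper's $x,y,g_+$) and that you label the direction starting from $g$ as ``forward'' whereas the statement is written with $\Gamma_\T(G)\to H$ on the left.
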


\begin{proof}
Let $g:G\to\Omega_{\T}(H)$ be a homomorphism, with
$g(u) = (g_0(u),g_+(u))$. 
Define $f:\Gamma_{\T}(G)\to H$ by $f(u) = g_0(u)$. 
Whenever $u \to v$ in~$\Gamma_{\T}(G)$, 
there exist vertices $x, y$ such that $u \leftarrow x \to y \to v$ in $G$.
Since  $g$ is a homomorphism, we then have $g(u) \leftarrow g(x) \to g(y) \to g(v)$ 
in $\Omega_{\T}(H)$. By definition of adjacency in $\Omega_{\T}(H)$, this implies
$$
f(u) =  g_0(u) \in g_+(x) \Rrightarrow g_+(y) \ni g_0(v) = f(v).
$$
Therefore  $f(u)\to f(v)$, so $f$ is a homomorphism.

Conversely, let $f:\Gamma_{\T}(G)\to H$ be a homomorphism.
For $u\in V(G)$, put
$$
g_+(u) =  \{ f(y) \colon (u,y) \in A(G) \}.
$$
We define $g:G\to\Omega_{\T}(H)$ by $g(u) = (f(u),g_+(u))$.
If $(u,v)$ is an arc of $G$, then $f(v) \in g_+(u)$, and for every
$(u,x), (v,y) \in A(G)$, we have $x \leftarrow u \to v \to y$
in $G$, hence $x \to y$ in $\Gamma_{\T}(G)$ and $f(x) \to f(y)$ in $H$.
Thus $g_+(u) \Rrightarrow g_+(v)$. 
Therefore $(g(u),g(v))$ is an arc of $\Omega_{\T}(H)$. 
This shows that $g$ is a homomorphism.
\end{proof}

There are often many constructions of a right adjoint of a central Pultr functor,
even though they are all homomorphically equivalent.
The above construction of $\Omega_{\T}(H)$ is more compact than the construction
derived later from the proof of Theorem~\ref{thm:tap}, using one less coordinate. For future
reference, we provide a second construction in the spirit of the proof of Theorem~\ref{thm:tap}.

\begin{defi}
For a digraph $H$, the vertices of $\Omega_{\T}'(H)$ are all the triples 
$(a,A_1,A_2)$ such that $a \in V(H)$, $A_1, A_2 \subseteq V(H)$ and $A_1 \Rrightarrow A_2$.
$(a,A_1,A_2)\to(b,B_1,B_2)$ is an arc of $\Omega_{\T}'(H)$ if and only if 
$b \in A_1$ and $B_1 \subseteq A_2$. 
\end{defi}
The following can be proved in a similar fashion to Proposition~\ref{prop:rad3p}.
\begin{prop} For any digraphs $G$, $H$,
$\Gamma_{\T}(G) \rightarrow H$ if and only if
$G \rightarrow \Omega_{\T}'(H)$.
\end{prop}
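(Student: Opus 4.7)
My plan is to adapt the argument of Proposition~\ref{prop:rad3p} to accommodate the extra coordinate. For the forward direction, given a homomorphism $g : G \to \Omega_{\T}'(H)$ with components $g(u) = (g_0(u), g_1(u), g_2(u))$, I will define $f : \Gamma_{\T}(G) \to H$ by $f(u) = g_0(u)$. Suppose $u \to v$ in $\Gamma_{\T}(G)$; then there exist $x, y \in V(G)$ with $u \leftarrow x \to y \to v$, since $Q$ is the zig-zag $0 \leftarrow 1 \to 2 \to 3$. Applying $g$ to the arcs $x \to u$, $x \to y$, $y \to v$ of $G$ and reading off the adjacency condition in $\Omega_{\T}'(H)$ yields $g_0(u) \in g_1(x)$, $g_1(y) \subseteq g_2(x)$, and $g_0(v) \in g_1(y)$, so $g_0(v) \in g_2(x)$. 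Because $g(x)$ is a vertex of $\Omega_{\T}'(H)$ we have $g_1(x) \Rrightarrow g_2(x)$, whence $f(u) \to f(v)$ in $H$.

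For the converse, I must build $g : G \to \Omega_{\T}'(H)$ from a homomorphism $f : \Gamma_{\T}(G) \to H$. Guided by the intended semantics of the coordinates (current vertex, one-step out-image, two-step out-image), I propose
\[
g_0(u) = f(u), \quad g_1(u) = \{ f(y) : (u,y) \in A(G) \}, \quad g_2(u) = \{ f(z) : (u,v), (v,z) \in A(G) \text{ for some } v\}.
\]
The key verification is that $g(u) = (g_0(u), g_1(u), g_2(u))$ belongs to $V(\Omega_{\T}'(H))$, that is, $g_1(u) \Rrightarrow g_2(u)$: given $y \in g_1(u)$ and $z \in g_2(u)$ witnessed by some $v$, the configuration $y \leftarrow u \to v \to z$ in $G$ produces an arc $y \to z$ in $\Gamma_{\T}(G)$, hence $f(y) \to f(z)$ in $H$.

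Once this is in hand, checking that $g$ is a homomorphism is routine: for $(u,v) \in A(G)$ the condition $g_0(v) \in g_1(u)$ holds because $v$ itself witnesses membership in $g_1(u)$, and $g_1(v) \subseteq g_2(u)$ because any out-neighbour of $v$ is reachable from $u$ by a length-$2$ forward path through $v$. I expect the only non-obvious step to be the definition of $g_2(u)$; everything else is dictated by the proof of Proposition~\ref{prop:rad3p}, and the extra coordinate is precisely what makes the adjacency of $\Omega_{\T}'$ local enough to foreshadow the general construction used in Theorem~\ref{thm:tap}.
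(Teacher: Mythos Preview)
Your argument is correct and is exactly the adaptation of Proposition~\ref{prop:rad3p} that the paper itself indicates (the paper gives no separate proof, only the remark that it ``can be proved in a similar fashion''). The one cosmetic slip is in the verification of $g_1(u)\Rrightarrow g_2(u)$: elements of $g_1(u)$ and $g_2(u)$ are values $f(y)$ and $f(z)$, not the vertices $y,z$ of~$G$ themselves, so the sentence should read ``given $f(y)\in g_1(u)$ and $f(z)\in g_2(u)$ witnessed by some $v$''---but the intended meaning is clear and the argument goes through.
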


We finish this section with a construction that gives the right
adjoint~$\Omega_\T$ for a fairly general family of templates~$\T$,
in which the respective $Q$'s are oriented paths.

\begin{defi}
\label{def:r-path}
Suppose that $Q$ is an oriented path with $m$ arcs and vertex set
$\{0,1,\dotsc,m\}$.
Let $\mathcal{T}=(\vec P_0,Q,\epsilon_1,\epsilon_2)$, where $\epsilon_1,\epsilon_2$ map the one-vertex graph~$\vec P_0$ to the
end-points of~$Q$. For a digraph~$H$, let the vertex set of~$\Rho_\T(H)$
be $V=\{(x,X_1,\dotsc,X_{m}) \colon x\in V(H),\ X_i\subseteq V(H)\text{
for each $i$, $X_m \Rrightarrow x$}\}$. There is an arc in~$\Rho_\T(H)$ from
$(x,X_1,\dotsc,X_{m})$ to $(y,Y_1,\dotsc, Y_{m})$ if
\begin{itemize}
\item[(1a)] $x\in Y_1$ if $0\to 1$ in $Q$,
\item[(1b)] $y\in X_1$ if $1\to 0$ in $Q$; and
\item[(2)] for each $i=1,\dotsc,m-1$:
\begin{itemize}
\item[(a)] $X_i\subseteq Y_{i+1}$ if $i\to i+1$ in $Q$,
\item[(b)] $Y_i\subseteq X_{i+1}$ if $i+1\to i$ in $Q$.
\end{itemize}
\end{itemize}
\end{defi}

The correctness of this construction, asserted in the following
proposition, follows from the proof of Theorem~\ref{thm:tap}.

\begin{prop}
Let $\T$ and $\Omega_\T$ be as in Definition~\ref{def:r-path}.
Then for any digraphs $G$, $H$, we have
$\Gamma_{\T}(G) \rightarrow H$ if and only if
$G \rightarrow \Omega_{\T}(H)$.
\end{prop}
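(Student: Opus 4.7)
The plan is to prove the adjunction by exhibiting, in each direction, an explicit translation between homomorphisms $\Gamma_\T(G)\to H$ and $G\to\Rho_\T(H)$, in the same spirit as the two preceding propositions. The intended reading of a vertex $(x,X_1,\dotsc,X_m)$ of $\Rho_\T(H)$ is that $X_i$ collects the possible $H$-images of the starting vertex $w_0$ of any partial $Q$-walk that terminates at the given $G$-vertex in position~$i$; the condition $X_m\Rrightarrow x$ is what turns such a full-length walk into an actual arc in~$H$.

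For the forward direction, let $g\colon G\to\Rho_\T(H)$ with $g(u)=(g_0(u),g_1(u),\dotsc,g_m(u))$, and define $f(u)=g_0(u)$. Given an arc $u\to v$ of $\Gamma_\T(G)$, pick a homomorphism $h\colon Q\to G$ witnessing it and set $w_i=h(i)$. Regardless of whether the first arc of~$Q$ is oriented $0\to 1$ or $1\to 0$, condition~(1a) or~(1b) of Definition~\ref{def:r-path}, applied to the corresponding arc between $w_0$ and $w_1$ in~$G$, forces $g_0(w_0)\in g_1(w_1)$; similarly, for each $i=1,\dotsc,m-1$, condition~(2a) or~(2b) yields $g_i(w_i)\subseteq g_{i+1}(w_{i+1})$. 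Chaining gives $g_0(w_0)\in g_m(w_m)$, and the vertex constraint $g_m(w_m)\Rrightarrow g_0(w_m)$ then provides the required arc $f(u)\to f(v)$ in~$H$.

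Conversely, given $f\colon\Gamma_\T(G)\to H$, set $g_0(u)=f(u)$ and, for $i=1,\dotsc,m$, take $g_i(u)$ to be the set of all $f(w_0)$ as $w_0,w_1,\dotsc,w_i=u$ ranges over walks in~$G$ whose first $i$ arcs have the same orientations as the first $i$ arcs of~$Q$. That $g(u)=(g_0(u),\dotsc,g_m(u))$ is a vertex of $\Rho_\T(H)$ amounts to $g_m(u)\Rrightarrow f(u)$, which holds because any element of $g_m(u)$ comes from a full $Q$-walk from some $w_0$ to~$u$, hence from an arc of $\Gamma_\T(G)$ that $f$ maps to an arc of~$H$. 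Each of the four arc conditions then reduces to the observation that a partial $Q$-walk of length~$i$ ending at~$u$ can be extended by one step along the $G$-arc between $u$ and~$v$ (oriented as the arc $i$--$(i{+}1)$ of~$Q$ dictates) to a partial $Q$-walk of length $i+1$ ending at~$v$.

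The main obstacle is purely notational: every condition has to be checked in both possible orientations of~$Q$ at the relevant position and in both possible orientations of the corresponding arc in~$G$. Once one notices that (1a)/(1b) and (2a)/(2b) are genuinely symmetric under swapping source and target of a $G$-arc, the eight sub-cases collapse to essentially two, and no deeper combinatorial difficulty seems to remain.
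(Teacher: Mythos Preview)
Your argument is correct. The paper does not give a separate proof of this proposition but simply notes that it follows from the proof of Theorem~\ref{thm:tap}; your direct verification is precisely what that general proof specializes to when $Q$ is a path (no pendent subtrees, middle vertex at~$m$, and each $X_i$ playing the role of the non-pendent subtree rooted at~$i$), so the approach is essentially the same.
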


\section{Examples: Compositions of adjoint functors and multiply exponential constructions}
\label{sec:compo}
The $k$-th iterated arc graph construction $\delta^k$ can be defined
recursively by $\delta^{k+1} = {\delta \circ \delta^k}$. It can also
be defined directly as $\delta^k = \Gamma_{\T}$, where 
$\T = (\vec P_k,\vec P_{k+1},\epsilon_1, \epsilon_2)$, with
\begin{gather*}
\begin{aligned}
\vec P_k &= \bigl(\{0,1,\ldots,k\},\ \{(0,1),(1,2),\ldots,(k-1,k)\}\bigr), \\
\vec P_{k+1} &= \bigl(\{0,1,\ldots,k+1\},\ \{(0,1),(1,2),\ldots,(k,k+1)\}\bigr),
\end{aligned} \\
\epsilon_1(i) = i \quad\text{and}\quad \epsilon_2(i) = i+1.
\end{gather*}
All iterated arc graph constructions admit right adjoints, defined
recursively by $\delta_R^{k+1} = \delta_R \circ \delta_R^k$.
In particular this shows the existence of a right Pultr adjoint
for templates with arbitrarily large~$P$. 

Note that the construction $\delta_R^k$ is an exponential construction iterated
$k$ times. In this case it turns out that multiply exponential size 
is necessary: For arbitrarily large integers~$n$, there are
graphs~$G$ such that $\chi(G) = n$ and $\chi(\delta^k(G)) = \Theta(\log^{(k)}(n))$.
For $m =   \Theta(\log^{(k)}(n))$, we then have $G \rightarrow \Omega(K_m)$,
for any right adjoint~$\Omega$ of~$\delta^k$. This means that $\Omega(K_m)$~needs at least
$n$ vertices. 

For any two central Pultr functors $\Gamma_1$, $\Gamma_2$, the composition $\Gamma_1 \circ \Gamma_2$
is a central Pultr functor for a suitably defined template. If $\Omega_1$, $\Omega_2$
are right adjoints of $\Gamma_1$ and $\Gamma_2$ respectively, then $\Gamma_1 \circ \Gamma_2$
admits a right adjoint, namely $\Omega_2 \circ \Omega_1$. For $\Gamma_1 = \delta$ and 
$\Gamma_2 = \Gamma_{\T}$, with $\T = (P,Q,\epsilon_1,\epsilon_2)$ being the template
of Section~\ref{sec:path}, we get $\Gamma_1 \circ \Gamma_2 = \Gamma_{\mathcal{U}}$,
with ${\mathcal{U}} = (\Lambda_{\T}(\vec{P}_1),\Lambda_{\T}(\vec{P}_2),\epsilon_1,\epsilon_2)$,
where $\epsilon_1,\epsilon_2$ map $\Lambda_{\T}(\vec{P}_1)$ respectively to the first
and second copy of $\Lambda_{\T}(\vec{P}_1)$ in $\Lambda_{\T}(\vec{P}_2)$.
It is not clear whether the doubly exponential construction of a right adjoint
$\Omega_{\T} \circ \delta_R$ of $\delta \circ \Gamma_{\T}$ is necessary in this case.
Composing in the reverse order, we get $\Gamma_{\T} \circ \delta = \Gamma_{\mathcal{V}}$,
where $\mathcal{V} = (\vec{P}_1, T, \epsilon_1, \epsilon_2)$ with
$T = ( \{0,1,2,3,4\}, \{ (0,1), (1,2), (1,3), (3,4) \} )$ and $\epsilon_1, \epsilon_2$
map $\vec{P}_1$ to the arcs $(1,2)$ and $(3,4)$ respectively. The proof of Theorem~\ref{thm:tap}
gives an exponential construction of a right adjoint of $\Gamma_{\mathcal{V}}$, while
the composition $\delta_R \circ \Omega_{\T}$ is doubly exponential.

Finally we note that with multiply exponential constructions,
the conditions defining adjacency may become increasingly intricate. 
Consider the path $P=\vec P_2=0\to1\to2$ and let $Q$ be the path $0\to 1\to2\leftarrow 0'\to 1'\to 2'$. Put $\epsilon_1[P]=0\to 1\to 2$ and
$\epsilon_2[P]=0'\to 1'\to 2'$. 
For the Pultr template $\T=(P,Q,\epsilon_1,\epsilon_2)$,
$\Gamma_{\T}$ does admit a right adjoint. We give the following
doubly exponential construction for $\Omega_{\T}$.
\begin{compactitem}
\item The vertices of $\Omega_T(H)$ are quadruples
$R=(R^{--},R^{-+},R^{+-},R^{++})$ such that each of the four sets is
a set of sets of vertices of~$H$ (that is, each $R^{**}\subseteq 2^{V(H)}$) 
and for any $M\in R^{-+}$ and any $N\in R^{+-}$ we have
$M\cap N\ne\emptyset$.
\item There is an arc $R\to S$ in~$\Omega_\T(H)$ if and only if
$\bigcup R^{++}\Rrightarrow\bigcup S^{--}$, $R^{+-}\cap
S^{--}\ne\emptyset$, and $R^{++}\cap S^{-+}\ne\emptyset$.
\end{compactitem}
It can be shown that $\Gamma_\T(G)\to H$ if and only if $G\to\Omega_\T(H)$.
Both conditions ``$A$~intersects~$B$'' and ``every element of~$A$ intersects
every element of~$B$'' are used in the construction of~$\Omega_\T(H)$. 
If the converse of Theorem~\ref{thm:tree} holds, increasingly complex relations 
may be needed to describe right adjoints corresponding to each suitable Pultr template.

\section{Example: A tree template} \label{sec:tree-example}

Our final example models the proof of Theorem~\ref{thm:tap}.

\begin{defi}
\label{defi:tree-tem}
Let $P=\vec P_1=\bigl(\{0,1\},\{(0,1)\}\bigr)$; let $Q$ have vertex set
$\{0,1,\dotsc,10\}$ and arcs $0\to1\to2\to3\to4$, $6\to7\to8\to9\to10$,
$7\to5\to3$. Let $\epsilon_1:P\to Q$, $\epsilon_1(0)=0$, $\epsilon_1(1)=1$, and $\epsilon_2:P\to
Q$, $\epsilon_2(0)=9$, $\epsilon_2(1)=10$.
Consider the Pultr template $\T=(P,Q,\epsilon_1,\epsilon_2)$, see figure.

For a digraph~$H$, define $\Omega_\T(H)$ by:
\begin{multline*}
V(\Omega_\T(H)) = \Bigl\{(S^-,S^+, S^{--}, S^{++}, S^{---}, S^{+++}, S^{-*+++}, S^{--*+++})
	\in (2^{V(H)})^8\colon \\
	\qquad\qquad\text{if } S^+\ne\emptyset \text{, then } S^{---} \Rrightarrow S^{--*+++}
	\Bigr\}
\end{multline*}
and $S\to T$ in $\Omega_\T(H)$ if
\begin{compactitem}
\item $S^+\cap T^- \ne \emptyset$,
\item $S^- \subseteq T^{--}$,
\item $S^{--} \subseteq T^{---}$,
\item $T^{+}\subseteq S^{++}$,
\item $T^{++}\subseteq S^{+++}$,
\item $S^{-*+++}\subseteq T^{--*+++}$, and
\item $S^-=\emptyset$ or $S^{+++}\subseteq T^{-*+++}$.
\end{compactitem}
\end{defi}

\kern-8\baselineskip
\begin{flushright}
\includegraphics{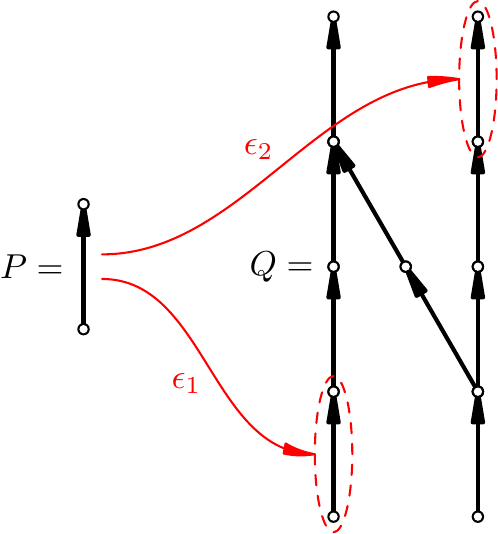}
\end{flushright}

\begin{prop}
Let $\T$ and $\Omega_\T$ be as in Definition~\ref{defi:tree-tem}.
Then for any digraphs $G$, $H$ we have $\Gamma_\T(G)\to H$ if and
only if $G\to\Omega_\T(H)$.
\end{prop}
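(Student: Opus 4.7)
The plan is to imitate the construction used in the proof of Proposition~\ref{prop:rad3p}, but with substantially more bookkeeping, since $Q$ is a tree whose two length-four arms are joined by the detour $7\to 5\to 3$, and a vertex of $\Omega_\T(H)$ carries eight coordinates.

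\emph{Sufficiency.} Assume $g\colon G\to\Omega_\T(H)$, and write $g(u)=(g_u^-,g_u^+,g_u^{--},g_u^{++},g_u^{---},g_u^{+++},g_u^{-*+++},g_u^{--*+++})$. For each arc $(u,v)$ of $G$, pick $f(u,v)\in g_u^+\cap g_v^-$; this set is nonempty by the first item of the arc definition of $\Omega_\T(H)$. Given an arc of $\Gamma_\T(G)$, realized by vertices $u_0,u_1,\dotsc,u_{10}$ of $G$ laid out along $Q$, I chain inclusions along both arms: items~(2)--(3) give $f(u_0,u_1)\in g_{u_1}^-\subseteq g_{u_2}^{--}\subseteq g_{u_3}^{---}$, and items~(4)--(7) give $f(u_9,u_{10})\in g_{u_9}^+\subseteq g_{u_8}^{++}\subseteq g_{u_7}^{+++}\subseteq g_{u_5}^{-*+++}\subseteq g_{u_3}^{--*+++}$. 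The use of item~(7) at the step $u_7\to u_5$ needs $g_{u_7}^-\neq\emptyset$, which is supplied by the arc $u_6\to u_7$ via item~(1); similarly the arc $u_3\to u_4$ forces $g_{u_3}^+\neq\emptyset$, so the intrinsic vertex condition at $u_3$ fires, giving $g_{u_3}^{---}\Rrightarrow g_{u_3}^{--*+++}$ and hence $f(u_0,u_1)\to f(u_9,u_{10})$ in~$H$.

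\emph{Necessity.} Assume $f\colon\Gamma_\T(G)\to H$. For each $u\in V(G)$, I set $g_u^-=\{f(x,u):x\to u\}$, $g_u^+=\{f(u,y):u\to y\}$, and define $g_u^{--},g_u^{++},g_u^{---},g_u^{+++}$ analogously using $f$-values of arcs two or three steps back or forward from $u$ in $G$. The two mixed coordinates encode the ``other arm'' reached through the detour: $g_u^{-*+++}$ collects $f(y_9,y_{10})$ over all configurations $y_6\to y_7$, $y_7\to u$, $y_7\to y_8\to y_9\to y_{10}$ that place $u$ at vertex~$5$ of $Q$, and $g_u^{--*+++}$ is obtained by prepending one more arc $u'\to u$ (placing $u$ at vertex~$3$). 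Items~(1)--(7) of adjacency in $\Omega_\T(H)$ then follow directly by appending or prepending a single arc to the witnessing walk.

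The main obstacle will be verifying that each $g(u)$ is actually a vertex of $\Omega_\T(H)$, i.e., that $g_u^{---}\Rrightarrow g_u^{--*+++}$ whenever $g_u^+\neq\emptyset$. Given $\alpha=f(x,x')\in g_u^{---}$ with witness $x\to x'\to x''\to u$, $\beta=f(y_9,y_{10})\in g_u^{--*+++}$ with witnesses $y_5\to u$, $y_6\to y_7\to y_5$, $y_7\to y_8\to y_9\to y_{10}$, and any successor $u_4$ of $u$ (which exists because $g_u^+\neq\emptyset$), the assignment $0\mapsto x$, $1\mapsto x'$, $2\mapsto x''$, $3\mapsto u$, $4\mapsto u_4$, $5\mapsto y_5$, $6\mapsto y_6$, $7\mapsto y_7$, $8\mapsto y_8$, $9\mapsto y_9$, $10\mapsto y_{10}$ defines a homomorphism $Q\to G$ that exhibits $(x,x')\to(y_9,y_{10})$ as an arc of $\Gamma_\T(G)$. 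Since $f$ is a homomorphism, $\alpha\to\beta$ in $H$, as required.
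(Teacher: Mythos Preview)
Your argument is correct and follows essentially the same route as the paper's proof: in both directions you use the same encoding (each coordinate of $g(u)$ records $f$-images of the $P$-arc over homomorphisms of the appropriate rooted subtree of~$Q$ into~$G$), and the verification proceeds by chaining the arc-inclusion conditions along the path $1\to2\to3\leftarrow5\leftarrow7\to8\to9$ to the vertex condition at the ``middle vertex''~$3$, exactly as in the paper. Your explicit listing of $u_0,\dotsc,u_{10}$ and the pointwise check that the assembled map $Q\to G$ is a homomorphism are, if anything, a bit more detailed than the paper's ``relatively easy'' dismissal of the arc-condition checks.
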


\begin{proof}
Let $g:G\to \Omega_\T(H)$. Define $f:V(\Gamma_\T(G)) \to V(H)$ by setting $f(u,v)$
to be any element of the (nonempty) set $g(u)^+\cap g(v)^-$. If
$(u,v)\to (x,y)$ in~$\Gamma_\T(G)$, then there exists $h:Q\to G$ such that
$h(0,1)=(u,v)$, $h(9,10)=(x,y)$. By definition, $f(u,v)\in g(v)^-$.
Because $h$ and $g$ are homomorphisms, $g(v)=g(h(1))\to
g(h(2))$ in $\Omega_\T(H)$, thus $g(v)^-\subseteq g(h(2))^{--}$.
Similarly,
\[ f(u,v)\in g(v)^- \subseteq g(h(2))^{--} \subseteq g(h(3))^{---}, \]
and
\[ f(x,y) \in g(x)^+ \subseteq g(h(8))^{++} \subseteq g(h(7))^{+++}
	\subseteq g(h(5))^{-*+++} \subseteq g(h(3))^{--*+++}; \]
here, for the third inclusion we need to observe that $g(h(7))^-\ne\emptyset$,
which follows from the existence of the arc $g(h(6))\to g(h(7))$.
Moreover, $g(h(3))\to g(h(4))$, so $g(h(3))^+\ne\emptyset$,
and hence $g(h(3))^{---}\Rrightarrow g(h(3))^{--*+++}$.
Therefore $f(u,v)\to f(x,y)$ in $\Omega_\T(H)$ and consequently $f$~is a
homomorphism.

For any $u\in V(G)$, let $u^+$ be the set of all arcs outgoing
from~$u$ in~$G$, let $u^{++}$ be the set of all arcs outgoing from
outneighbours of~$u$, etc. In this way, for instance, $u^{+++}$
will be the set of images of the arc $(9,10)$ of~$Q$, under any
homomorphism~$h$ of $Q[7,8,9,10]$, the subtree of~$Q$ on
$\{7,8,9,10\}$, to~$G$, with the property that $h(7)=u$.
Analogously, let
\begin{align*}
u^{-*+++} &= \{ h(9,10)\colon h:Q[5,6,\dotsc,10]\to G,\ h(5)=u \}, \\
u^{--*+++} &= \{ h(9,10)\colon h:Q[4,5,\dotsc,10]\to G,\ h(4)=u \}.
\end{align*}

Now let $f:\Gamma_\T(G)\to H$. Define $g:V(G)\to V(\Omega_\T(H))$ by \[g(u)=(f[u^-],
f[u^+], f[u^{--}], \dotsc, f[u^{--*+++}]).\] If $u^+\ne\emptyset$,
$a_1\in u^{---}$, $a_2\in u^{--*+++}$, then we observe that there
is a homomorphism $h:Q\to G$ such that $h(3)=u$, $h(0,1)=a_1$,
$h(3,4)\in u^+$, and $h(9,10)=a_2$. Thus $a_1\to a_2$ in~$\Gamma_\T(G)$,
and hence $f(a_1)\to f(a_2)$ in~$H$. Therefore whenever
$g(u)^+\ne\emptyset$, we have $g(u)^{---}\Rrightarrow g(u)^{--*+++}$,
so each $g(u)$ is indeed a vertex of~$\Omega_\T(H)$.

If $u\to v$ in~$G$, then $f(u,v)\in g(u)^+\cap g(v)^-\ne\emptyset$.
Checking the inclusions in the definition of an arc of~$\Omega_\T(H)$ is
then relatively easy. We can conclude that $g(u)\to g(v)$ in~$\Omega_\T(H)$,
so $g$~is a homomorphism.
\end{proof}

\section{Right Pultr adjoints for tree templates}
\label{sec:tree}

In this section we consider templates $\T=(P,Q,\epsilon_1,\epsilon_2)$,
where $P=\vec P_0$ (a single vertex) or $P=\vec P_1$ (a single arc),
and $Q$~is a tree. We prove the following:
\begin{theorem} \label{thm:tap}
Let $\T=(P,Q,\epsilon_1,\epsilon_2)$ be a Pultr template such that
$P=\vec P_0$ or $P=\vec P_1$ and $Q$~is a tree. Then there exists
a functor~$\Omega_\T$ such that for any digraphs $G,H$ we have
$\Gamma_\T(G)\to H$ if and only if $G\to\Omega_\T(H)$.
\end{theorem}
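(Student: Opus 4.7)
The plan is to generalize the construction in Section~\ref{sec:tree-example} from the specific tree treated there to an arbitrary tree~$Q$. Regarding $Q$ as an undirected tree, there is a unique path $\pi$ from $A_1:=\epsilon_1[P]$ to $A_2:=\epsilon_2[P]$, which I call the spine. Every vertex $q\in V(Q)$ either lies on $\pi$ or is separated from it by a unique path to a spine vertex $\sigma(q)$.

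For the construction, for each vertex $u\in V(G)$ and a suitable collection of vertices $q\in V(Q)$, I would define reach sets $X_q(u)\subseteq V(\Gamma_\T(G))$ of the form $\{h[A_i]:h\colon Q_q\to G,\ h(q)=u\}$, where $Q_q$ is a specified subtree of $Q$ containing either $A_1$ or $A_2$, and $i\in\{1,2\}$ is determined accordingly. The choice of which anchor to use at each~$q$, and of which subtree~$Q_q$ to take, follows the pattern exhibited in Section~\ref{sec:tree-example}: on the $A_1$-half of the spine one records $A_1$-reach using a subtree extending toward $A_1$ along the spine (with branches hanging off the intermediate spine vertices); on the $A_2$-half one records $A_2$-reach; off-spine vertices are treated so that each branch of $Q$ is witnessed by at least one coordinate. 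Vertices of $\Omega_\T(H)$ are then tuples $S=(S_q)_q$ of subsets of~$V(H)$ intended to encode $f[X_q(u)]$ for a candidate homomorphism $f\colon\Gamma_\T(G)\to H$, subject to a consistency constraint at the middle of the spine (a conditional $\Rrightarrow$ statement analogous to the clause in Definition~\ref{defi:tree-tem}). Arcs of $\Omega_\T(H)$ are defined by: for each arc of~$Q$, an inclusion between matching coordinates of $S$ and $T$ (its direction depending on the orientation of the arc relative to the target anchor), together with an intersection condition $S_q\cap T_{q'}\ne\emptyset$ at each endpoint of~$\pi$ corresponding to the anchors~$A_1$ and~$A_2$.

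With this setup, the two directions of the equivalence mirror the verification in Section~\ref{sec:tree-example}. Given $g\colon G\to\Omega_\T(H)$, for each vertex $a\in V(\Gamma_\T(G))$ — a homomorphism $\phi\colon P\to G$ — one picks $f(a)$ to be any element of an appropriate intersection of coordinates of $g(\phi(x))$ over $x\in V(P)$, which is nonempty by the intersection clause of the arc definition. Verification that $f$ is a homomorphism proceeds, for each arc of $\Gamma_\T(G)$ with witness $h\colon Q\to G$, by chasing the inclusions along the spine of $Q$: on the $A_1$-half, $f(a)$ belongs to successively larger $L$-coordinates of $g(h(q))$; on the $A_2$-half, $f(b)$ belongs to successively larger $R$-coordinates; the two are joined at the middle of the spine through the consistency constraint, yielding $f(a)\to f(b)$ in~$H$. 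Conversely, given $f$, set $g(u):=(f[X_q(u)])_q$; each arc condition and the consistency constraint in $\Omega_\T(H)$ follow directly from the definitions of the reach sets and the hypothesis that $f$ is a homomorphism.

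The main obstacle is the combinatorial bookkeeping: deciding, for a general tree, which vertices of $Q$ receive their own coordinate, which anchor and which subtree each such coordinate records, and how to formulate the adjacency conditions so that every edge of $Q$ contributes a genuine inclusion. The consistency constraint at the middle of the spine also requires care, as it must correctly witness the existence of a full homomorphism $Q\to G$ piecing together the $A_1$- and $A_2$-sides through that point, while remaining compatible with arcs in both directions at that vertex. Once the coordinates and adjacency conditions are carefully laid out, correctness is a direct but somewhat tedious generalization of the verification in Section~\ref{sec:tree-example}.
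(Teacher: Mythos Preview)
Your outline is essentially the paper's own approach: the spine~$\tilde Q$ between the two anchors, set-valued ``reach'' coordinates recording images of the anchor under partial homomorphisms of subtrees, a $\Rrightarrow$ consistency clause at a chosen middle vertex, inclusion conditions along arcs of~$Q$, and an intersection condition to pick values of~$f$ on $P$-images. The one ingredient you leave vague is the treatment of \emph{pendent} subtrees (branches of~$Q$ hanging off the spine that contain neither anchor): your reach sets $X_q(u)=\{h[A_i]:\dots\}$ only make sense for subtrees containing an anchor, so pendent branches need a separate mechanism. The paper handles them with boolean $\{0,1\}$ coordinates recording merely whether the pendent subtree maps into~$G$ with its root at~$u$, and the arc-inclusion conditions become conditional on these booleans (``if $R^T=1$ for every pendent $T\in\S_a$, then $R^{T_a}\subseteq S^{T_b}$''). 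This is precisely the ``combinatorial bookkeeping'' you flag as the main obstacle, and once it is in place your two directions of the equivalence go through exactly as you describe.
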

As a matter of fact, many different non-isomorphic constructions
are possible (and we shall hint at some variations here as well),
but they are all homomorphically equivalent.

\begin{proof}[Proof of Theorem~\ref{thm:tap}]
There is a pathologically trivial case where $\epsilon_1=\epsilon_2$, which we
do not consider in the following exposition.

\paragraph{The subtrees of $Q$.}

Every vertex of $\Rho_\T(H)$ will be a vector of subsets of~$V(H)$,
indexed by some rooted subtrees of~$Q$.  Namely, the rooted subtrees
are determined as follows:

The images $\epsilon_1[P]$ and $\epsilon_2[P]$ in~$Q$ are connected
by a path; call this path~$\tilde Q$. The path~$\tilde Q$ contains
one vertex from each of $\epsilon_1[P]$ and $\epsilon_2[P]$. Thus in
the template of Definition~\ref{defi:path}, $\tilde Q=Q$; in the arc graph construction,
$\tilde Q$~is just the vertex~$1$; whereas for the $\T$ of Section~\ref{sec:tree-example},
$\tilde Q$~is the path $1\to 2\to 3\leftarrow 5\leftarrow 7\to 8\to 9$.
We choose any vertex $m$ on the path~$\tilde Q$ and call it the
\emph{middle vertex}. (The discretion in our choice of the middle vertex 
is one of the causes for the existence of many different right adjoints.)

If $P=\vec P_0$ and $\epsilon_1[P]$ and $\epsilon_2[P]$ are joined
by an arc, either $\epsilon_1[P]$ or $\epsilon_2[P]$ can be taken
to be the middle vertex. If $P=\vec P_1$ and the images $\epsilon_1[P]$
and $\epsilon_2[P]$ intersect (like in the arc graph template), the
middle vertex is the vertex they intersect in.

Now, every non-leaf~$u$ of~$Q$ is a cut vertex. Consider each subtree
induced by the vertex~$u$ and all the vertices of some component
of ${Q-u}$ (obtained from~$Q$ by removing the vertex~$u$); take
only those components that do not contain the middle vertex. Let
$\S_u$ be the set of all such subtrees, each rooted in~$u$. Finally, let
$\S'=\bigcup\{\S_u\colon u\text{ is a non-leaf of }Q\}$.
Notice that the root of each $T\in\S'$ has degree~$1$ in~$T$.

Some of the trees in~$\S'$ contain either $\epsilon_1[P]$ or
$\epsilon_2[P]$; call this image the \emph{$P$-arc} or \emph{$P$-vertex}
of that tree (depending on whether $P=\vec P_1$ or $P=\vec P_0$).
Trees from~$\S'$ with no $P$-arc or $P$-vertex are called \emph{pendent
subtrees} because they appear to hang from the path~$\tilde Q$.
Each $\S_u$ contains at most one non-pendent tree~$T_u$ unless
$u$~is the middle vertex~$m$; then $\S_m$~may contain two non-pendent
subtrees: $T_{m,1}$ containing $\epsilon_1[P]$ and $T_{m,2}$ containing~$\epsilon_2[P]$.

Finally, let $\S$ consist of the trees in~$\S'$, taken up to isomorphism.
An isomorphism is meant to preserve not only the
vertices and arcs, but the root and the $P$-vertex or $P$-arc as well.%
\footnote{In fact, we may take these trees up to homomorphic equivalence,
with homomorphisms preserving arcs, the root and the $P$-vertex or $P$-arc.}

\paragraph{The vertices of $\Rho_\T(H)$.}

A vertex of $\Rho_\T(H)$ is
\begin{compactitem}
\item any vector $(R\bul,\ R^T\colon T\in\S)$ if $P=\vec P_0$,
\item any vector $(R^T\colon T\in\S)$ if $P=\vec P_1$,
\end{compactitem}
where $R\bul\in V(H)$,
$R^T\in\{0,1\}$ if $T$~is a pendent subtree, and $R^T\subseteq V(H)$
otherwise, if it satisfies the following condition (remember that $m$~is the middle vertex of~$Q$):
\begin{equation}
\label{eq:vert}
\text{If } R^T=1 \text{ for every pendent } T\in\S_m \text{, then }
R^{T_{m,1}} \Rrightarrow R^{T_{m,2}}.
\end{equation}
Recall that $A\Rrightarrow B$ means that $a\to b$ for any vertex $a\in A$
and any vertex $b\in B$; here the arc is meant to exist in~$H$.

\paragraph{The arcs of $\Rho_\T(H)$.}
Let $R,S$ be vertices of $\Rho_\T(H)$. Thus $R=(R\bul,R^T\colon {T\in\S})$,
$S=(S\bul,S^T\colon {T\in\S})$, or $R=(R^T\colon T\in\S)$, $S=(S^T\colon
T\in\S)$. Then $R\to S$ in $\Rho_\T(H)$ if and only if all of the
following conditions are satisfied:

\begin{itemize}

\item If $P=\vec P_0$:
For an arc~$e$ of the path~$\tilde Q$ with vertices $a,b$ such that
$a=\epsilon_i[P]$ for some $i\in\{1,2\}$, let $T_b$ be the non-pendent
tree rooted in~$b$ that contains~$a$.
If $e=(a,b)$, then we have the condition
\begin{equation}
\label{eq:arc-1}
\text{if $R^T=1$ for every pendent $T\in\S_a$, then } R\bul \in S^{T_b};
\end{equation}
if $e=(b,a)$, then we have the condition
\begin{equation}
\label{eq:arc-2}
\text{if $S^T=1$ for every pendent $T\in\S_a$, then } S\bul \in R^{T_b}.
\end{equation}

\item If $P=\vec P_1$: Let $\epsilon_1[P]$ consist of vertices
$a,b$, and let $\epsilon_2[P]$ consist of vertices $c,d$, so that
$b$ and $c$ would be ``closer'' to the middle vertex~$m$; that is,
$b,c$~are vertices of the path~$\tilde Q$. (If the images $\epsilon_1[P]$
and $\epsilon_2[P]$ intersect, then $b=c=m$.) The sets $\S_a$ and
$\S_d$ contain only pendent trees. Let $T_b$ be the non-pendent
tree in~$\S_b$ that contains~$a,b$, and let $T_c$ be the non-pendent
tree in~$\S_c$ that contains~$c,d$. If $a\to b$ in~$Q$, put $A=R$
and $B=S$; if on the other hand $b\to a$ in~$Q$, put $A=S$ and
$B=R$. Analogously, if $c\to d$ in~$Q$, put $C=R$ and $D=S$; if
$d\to c$ in~$Q$, put $C=S$ and $D=R$. Then we have the following
three conditions:

\begin{align}
\label{eq:arc-5}
&\text{If $A^T=1$ for every $T\in\S_a$, then $B^{T_b}\ne\emptyset$.}\\
\label{eq:arc-6}
&\text{If $D^T=1$ for every $T\in\S_d$, then $C^{T_c}\ne\emptyset$.}\\
\label{eq:arc-7}
&\text{If $A^T=1$ for every $T\in\S_a$ and $D^T=1$ for every $T\in\S_d$, then $B^{T_b}\cap C^{T_c}\ne\emptyset$.}
\end{align}

\item For any arc~$e$ of the path~$\tilde Q$ not covered by conditions
\eqref{eq:arc-1}--\eqref{eq:arc-2}, let $a,b$ be the vertices of~$e$
so that every tree in $\S_a$ is a subtree of~$T_b$, a non-pendent
tree in~$\S_b$.
If $e=(a,b)$, then we have the condition
\begin{equation}
\label{eq:arc-3}
\text{if $R^T=1$ for every pendent $T\in\S_a$, then } R^{T_a} \subseteq S^{T_b};
\end{equation}
if $e=(b,a)$, then we have the condition
\begin{equation}
\label{eq:arc-4}
\text{if $S^T=1$ for every pendent $T\in\S_a$, then } S^{T_a} \subseteq R^{T_b}.
\end{equation}

\item For any other arc~$e$ of~$Q$, let $a,b$ be the vertices of~$e$ and let $T'\in\S_b$ so that
every tree in $\S_a$ is a subtree of~$T'$.
If $e=(a,b)$, then we have the condition
\begin{equation}
\label{eq:arc-8}
\text{if $R^T=1$ for every $T\in\S_a$, then } S^{T'}=1;
\end{equation}
if $e=(b,a)$, then we have the condition
\begin{equation}
\label{eq:arc-last}
\text{if $S^T=1$ for every $T\in\S_a$, then } R^{T'}=1.
\end{equation}
\end{itemize}

Thus we might say that it is really tough to be an arc.

Please take another look at the example in Section~\ref{sec:tree-example}.
Which vertex of~$Q$ did we choose to be the middle vertex?  You
should be warned that in the example, we combined the roles of the
non-pendent tree and the pendent tree~$\vec P_1$ rooted in the
tail, as well as the roles of the non-pendent tree and the
pendent tree~$\vec P_1$ rooted in the head. This has led to a
construction not isomorphic but homomorphically equivalent to the
recipe given here.

\paragraph{The homomorphisms.}

Let $f:\Gamma_\T(G) \to H$ be a homomorphism.  We will define a
mapping $g:V(G)\to V(\Omega_\T(H))$.  For a vertex~$u$ of~$G$ and
a subtree $T\in\S$ rooted in~$t$, define $g(u)^T$ in the following
way:

\noindent
If $T$ is pendent, let
\begin{equation}
\label{eq:etautpen}
g(u)^T =
\begin{cases}
1 & \text{if there exists a homomorphism $h:T\to G$ such that } h(t) = u,\\
0 & \text{otherwise}.
\end{cases}
\end{equation}
If $T$ is non-pendent, let $p$ be the $P$-vertex or the $P$-arc of~$T$ and let
\begin{equation}
\label{eq:etautnonpen}
g(u)^T = \bigl\{\, f(h(p))\colon h:T\to G \text{ is a homomorphism such that } h(t)=u \,\bigr\}.
\end{equation}
Moreover, if $P=\vec P_0$, set
\begin{equation}
g(u)\bul = f(u).
\end{equation}

First we need to verify that $g(u)$ is indeed a vertex of~$\Omega_\T(H)$.
Thus we need to verify the condition~\eqref{eq:vert}.  Suppose that
$u\in V(G)$ and that $g(u)^T=1$ for every pendent $T\in\S_m$. Hence
by definition there exist homomorphisms $h_T:T\to G$ with $h_T(m)=u$.
Let $x_1\in g(u)^{T_{m,1}}$ and $x_2\in g(u)^{T_{m,2}}$. Then
by~\eqref{eq:etautnonpen} there are homomorphisms $h_i:T_{m,i}\to
G$ with $h_i(m)=u$ and $x_i=f(h(\epsilon_i[P]))$ for $i=1,2$. Since
all the trees in~$\S_m$ share only the vertex~$m$, we can define a
homomorphism $h:Q\to G$ by putting $h(m)=u$, $h(a)=h_i(a)$ if $a$~is
a vertex of a non-pendent~$T_{m,i}$, and $h(a)=h_T(a)$ if $a$~is a
vertex of a pendent $T\in\S_m$. Thus by definition $h(\epsilon_1[P])\to
h(\epsilon_2[P])$ in~$\Gamma_\T(G)$. Because $f$~is a homomorphism,
$x_1=f(h(\epsilon_1[P]))\to f(h(\epsilon_2[P]))=x_2$. Therefore
$g(u)^{T_{m,1}}\Rrightarrow g(u)^{T_{m,2}}$ as we were supposed to
prove.

We aim to show that $g$~is a homomorphism, that is, that whenever
$u\to v$ in~$G$, then $g(u)\to g(v)$ in~$\Omega_\T(H)$. Thus we
need to check all the conditions \eqref{eq:arc-1}--\eqref{eq:arc-last}.
The conditions are all similar in nature and they are all proved
by stitching together homomorphisms in a fashion similar to
that of the previous paragraph.

Let us start with condition~\eqref{eq:arc-1}. Suppose that $P=\vec
P_0$, $e=(a,b)$ is an arc of~$\tilde Q$, $a=\epsilon_i[P]$. Let
$T_b$ be the non-pendent tree rooted in~$b$ that contains~$a$. If
$g(u)^T=1$ for every pendent $T\in\S_a$, then by~\eqref{eq:etautpen}
there exist homomorphisms $h_T:T\to G$ with $h_T(a)=u$. Since
$T_b$~consists of the union of the trees~$T\in\S_a$ and the
arc~$(a,b)$, we may define $h:T_b\to G$ by putting $h(a)=u$, $h(b)=v$,
and $h(x)=h_T(x)$ for any other vertex~$x$ of~$T_b$, where $T$~is
the unique tree $T\in\S_a$ that contains~$x$. By our assumption
$u\to v$ in~$G$, so $h$~is a homomorphism. Then by~\eqref{eq:etautnonpen}
we have $g(u)\bul=f(u)=f(h(a))\in g(v)^{T_b}$, which
verifies~\eqref{eq:arc-1}.

Condition \eqref{eq:arc-2} is analogous to~\eqref{eq:arc-1}.

Suppose now that $P=\vec P_1$, $\epsilon_1[P]=(a,b)$,
$\epsilon_2[P]=(c,d)$, $b,c$~are vertices of the path~$\tilde Q$.
Let $T_b$ be the non-pendent tree rooted in~$b$ that contains~$a$
and let $T_c$ be the non-pendent tree rooted in~$c$ that contains~$d$.
If $g(u)^T=1$ for every $T\in\S_a$, then by~\eqref{eq:etautpen} there
exist homomorphisms $h_T:T\to G$ with $h_T(a)=u$. As above, we can
define  a homomorphism $h:T_b\to G$ by putting $h(a)=u$, $h(b)=v$,
and $h(x)=h_T(x)$ with $T$ being the corresponding tree in~$\S_a$.
Since $(a,b)$~is the $P$-arc of~$T_b$, by~\eqref{eq:etautnonpen}
we have $f(u,v)\in g(v)^{T_b}$; hence $g(v)^{T_b}\ne\emptyset$,
which verifies~\eqref{eq:arc-5}. By an analogous argument we can
show that if $g(v)^T=1$ for every $T\in\S_d$, then $f(u,v)\in
g(u)^{T_c}\ne\emptyset$, which verifies~\eqref{eq:arc-6}. Therefore
if both $g(u)^T=1$ for every $T\in\S_a$ and $g(v)^T=1$ for every
$T\in\S_d$, then $f(u,v)\in g(v)^{T_b}\cap g(u)^{T_c}\ne\emptyset$,
which verifies condition~\eqref{eq:arc-7}. If $\epsilon_1[P]=(b,a)$
and/or $\epsilon_2[P]=(d,c)$, the proof is analogous.

For condition~\eqref{eq:arc-3}, let $e=(a,b)$ be an arc of~$Q$, let
$T_b$ be the non-pendent tree rooted in~$b$ that contains~$a$, let
$T_a$ be the non-pendent tree rooted in~$a$, and suppose that
$g(u)^T=1$ for every pendent $T\in\S_a$; thus by~\eqref{eq:etautpen}
for every such~$T$ there exists a homomorphism $h_T:T\to G$ such
that $h_T(a)=u$. Whenever $x\in g(u)^{T_a}$, by~\eqref{eq:etautnonpen}
there exists a homomorphism $h_{T_a}:T_a\to G$ such that $h(a)=u$
and $x=f(h_{T_a}(p))$, where $p$~is the $P$-vertex or the $P$-arc
of~$T_a$ (thus also the $P$-vertex or the $P$-arc of~$T_b$). Note
that $T_b$~is the union of the arc~$(a,b)$ and of all the trees
in~$\S_a$; so there is a homomorphism $h:T_b\to G$ such that $h(b)=v$
that coincides with~$h_T$ on each subtree~$T\in\S_a$, in particular,
$h(p)=x$.  Hence, by~\eqref{eq:etautnonpen}, $x\in g(v)^{T_b}$.
Therefore $g(u)^{T_a}\subseteq g(v)^{T_b}$ as we are supposed to
show.

Condition~\eqref{eq:arc-4} is analogous to condition~\eqref{eq:arc-3},
and the remaining two conditions are very similar as well.
Thus we have shown that if $\Gamma_\T(G)\to H$, then
$G\to\Omega_\T(H)$.

To show the opposite implication, suppose that $g:G \to \Rho_\T(H)$
is a homomorphism. First, if $P=\vec P_0$, then vertices of~$\Gamma_\T(G)$
are essentially the same as vertices of~$G$. For $u\in V(G)$, put
\begin{equation}
f(u) = g(u)\bul.
\end{equation}

If on the other hand $P=\vec P_1$, then vertices of~$\Gamma_\T(G)$
are essentially the same as arcs of~$G$.  If $(u,v)$ is an arc
of~$G$, then $g(u)\to g(v)$ in~$\Rho_\T(H)$.  Let $a,b,c,d$, $T_b,T_c$
and $A,B,C,D$ be as in conditions \eqref{eq:arc-5}--\eqref{eq:arc-6}
with $R=g(u)$, $S=g(v)$. If the hypothesis of condition~\eqref{eq:arc-7}
is satisfied, define $f(u,v)$ to be an arbitrary vertex in~$B^{T_b}\cap
C^{T_c}$. If only the hypothesis of condition~\eqref{eq:arc-5} is
satisfied, define $f(u,v)$ to be an arbitrary vertex in~$B^{T_b}$;
if only the hypothesis of condition~\eqref{eq:arc-6} is satisfied,
define $f(u,v)$ to be an arbitrary vertex in~$C^{T_c}$. Otherwise
let $f(u,v)$ be an arbitrary vertex of~$H$. (Note that if $H$~has
no vertices, then $\Omega_\T(H)$~has no arcs; thus if $G\to\Omega_\T(H)$,
then $\vec P_1\notto G$, and so $\Gamma_\T(G)$~has no vertices.)

\begin{claim}
\label{calim1}
For any pendent $T'\in\S$ rooted in some~$b$ and any homomorphism
$h:T'\to G$ we have $g(h(b))^{T'}=1$.
\end{claim}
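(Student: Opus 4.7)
The plan is to prove the claim by induction on $|V(T')|$. Let $a$ be the unique neighbor of~$b$ in~$T'$ (unique because the root has degree~$1$ in~$T'$), set $v=h(a)$, and let $e$ be the arc of~$Q$ joining $a$ and~$b$; then either $u\to v$ or $v\to u$ in~$G$, according to the direction of~$e$.

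The first task is to identify the inductive substructure. The components of $Q-a$ split into one component containing~$b$ (and therefore also the middle vertex~$m$) and the components of $(T'\setminus\{b\})-a$; hence $\S_a$ consists of exactly the subtrees formed by~$a$ together with a component of $T'\setminus\{b,a\}$. Since $T'$ is pendent, none of these components meets $\epsilon_1[P]\cup\epsilon_2[P]$, so each $T_i\in\S_a$ is itself a pendent subtree of~$Q$ and a proper subtree of~$T'$. Restricting $h$ to each such $T_i$ yields a homomorphism $T_i\to G$ sending $a$ to~$v$, and the induction hypothesis gives $g(v)^{T_i}=1$ for every $T_i\in\S_a$.

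The second step exploits that $g$ is a homomorphism, so the arc between $g(u)$ and $g(v)$ in~$\Omega_\T(H)$ satisfies the adjacency conditions. Because $a$ lies strictly on the pendent side of~$T'$, the arc~$e$ is not an arc of the central path~$\tilde Q$, so the applicable condition is \eqref{eq:arc-8} or \eqref{eq:arc-last}. A key observation is that the tree in~$\S_b$ referenced by those conditions --- the one containing every element of~$\S_a$ as a subtree --- is precisely our pendent~$T'$. Thus, depending on the direction of~$e$ in~$Q$, one applies either \eqref{eq:arc-last} with $R=g(u), S=g(v)$, or \eqref{eq:arc-8} with $R=g(v), S=g(u)$; the premise (which amounts to ``$g(v)^T=1$ for every $T\in\S_a$'') holds by the induction --- vacuously in the base case $|V(T')|=2$, where $a$ is a leaf of~$Q$ and $\S_a=\emptyset$ --- and the consequence is exactly $g(u)^{T'}=1$. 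The main obstacle is purely notational: lining up the symbols $a, b, T'$ of the generic arc conditions with the concrete objects at hand, and selecting the correct case according to the orientation of~$e$.
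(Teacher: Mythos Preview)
Your argument is correct and follows essentially the same approach as the paper: induction on the size of~$T'$, using the arc conditions \eqref{eq:arc-8} and \eqref{eq:arc-last} applied to $g(h(a))$ and $g(h(b))$. The only slip is that you never define~$u$; from context it is clearly meant to be $h(b)$, so just add that.
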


\begin{proof}
By induction on the number of arcs of~$T'$. If $T'$ has one arc,
without loss of generality we may assume it is $a\to b$. Then
$\S_a=\emptyset$ and we have $g(h(a))\to g(h(b))$ in~$\Omega_\T(H)$;
hence by~\eqref{eq:arc-8}, $g(h(b))^{T'}=1$. (If $a\leftarrow b$,
then condition~\eqref{eq:arc-last} applies.)

If $T'$ has more than one arc, then still the root~$b$ has degree~$1$
in~$T'$, so there is a unique arc (w.l.o.g.) $a\to b$. By induction,
for every $T\in\S_a$ we have $g(h(a))^{T}=1$. Since $g(h(a))\to
g(h(b))$ in~$\Omega_\T(H)$, by~\eqref{eq:arc-8} we have $g(h(b))^{T'}=1$.
(Again, if $a\leftarrow b$, then condition~\eqref{eq:arc-last} applies.)
\end{proof}

\begin{claim}
\label{calim2}
Let $b$ be any vertex of the path~$\tilde Q$ in~$Q$.
Let $T_b$ be a non-pendent tree rooted in~$b$ and let $i\in\{1,2\}$
be such that $T_b$~contains~$p_i=\epsilon_i[P]$.
Let $h:T_b\to G$ be a homomorphism.
Then $f(h(p_i))\in g(h(b))^{T_b}$.
\end{claim}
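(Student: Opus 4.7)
The plan is to prove Claim~\ref{calim2} by induction on the number~$k$ of vertices of~$T_b$ lying on the path~$\tilde Q$, which measures the length of the spine of~$T_b$ from~$b$ to the end that carries~$p_i$. At each step Claim~\ref{calim1} supplies the pendent hypotheses needed to invoke the adjacency conditions \eqref{eq:arc-1}--\eqref{eq:arc-7} defining $\Rho_\T(H)$.

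For the base case $k=1$, the vertex~$b$ is itself the endpoint of~$\tilde Q$ closest to~$p_i$, and $T_b$ consists of~$b$, either the edge between~$b$ and~$p_i$ (if $P=\vec P_0$) or the arc~$p_i$ itself (if $P=\vec P_1$), together with the pendent subtrees rooted at~$p_i$ or at the off-$\tilde Q$ endpoint of~$p_i$, respectively. Applying Claim~\ref{calim1} to each restriction $h|_{T}$ for such a pendent tree~$T$ shows that every pendent label at the corresponding root is~$1$, so the pendent hypothesis of the appropriate one of \eqref{eq:arc-1}/\eqref{eq:arc-2} (for $P=\vec P_0$) or of \eqref{eq:arc-5}/\eqref{eq:arc-6} (for $P=\vec P_1$) is met. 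In the $\vec P_0$ case the conclusion is $g(h(p_i))\bul \in g(h(b))^{T_b}$, and since $f(h(p_i))=g(h(p_i))\bul$ by definition, the base case is done. In the $\vec P_1$ case the construction of~$f$ in the preceding paragraphs selected $f(h(p_i))$ from $B^{T_b}=g(h(b))^{T_b}$ precisely because the hypothesis of the relevant condition was satisfied.

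For the inductive step $k\geq2$, let $v_1$ be the neighbour of~$b$ on~$\tilde Q$ that lies in~$T_b$, and let $T_{v_1}\in\S_{v_1}$ be the non-pendent subtree containing~$p_i$. Then $T_b$ decomposes as the edge between $b$ and~$v_1$, the subtree~$T_{v_1}$, and the pendent trees in~$\S_{v_1}$. The inductive hypothesis, applied to $h|_{T_{v_1}}$, yields $f(h(p_i))\in g(h(v_1))^{T_{v_1}}$. Since $g$ is a homomorphism, the arc between $h(v_1)$ and~$h(b)$ in~$G$ produces an arc between $g(h(v_1))$ and~$g(h(b))$ in~$\Rho_\T(H)$; verifying once more the pendent hypothesis of the corresponding instance of \eqref{eq:arc-3}/\eqref{eq:arc-4} via Claim~\ref{calim1} gives the inclusion $g(h(v_1))^{T_{v_1}}\subseteq g(h(b))^{T_b}$, completing the step.

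The main obstacle is really just bookkeeping of orientations: one must track the direction of each relevant arc (the spinal arc of~$\tilde Q$ under scrutiny, and in the $\vec P_1$ case also the $P$-arc) in order to decide whether to use \eqref{eq:arc-1} versus \eqref{eq:arc-2}, \eqref{eq:arc-5} versus \eqref{eq:arc-6}, or \eqref{eq:arc-3} versus \eqref{eq:arc-4}, and to match up the $A,B,C,D$ (or $R,S$) of the adjacency definition with the actual source and target in~$G$. Once the correct instance is lined up, the implication is immediate, and no new ideas beyond Claim~\ref{calim1} and the definitions are required.
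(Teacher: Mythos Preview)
Your argument is correct and follows essentially the same route as the paper's own proof: induction along the spine of~$\tilde Q$, with Claim~\ref{calim1} supplying the pendent hypotheses and conditions \eqref{eq:arc-1}--\eqref{eq:arc-4} (plus the definition of~$f$ in the $\vec P_1$ case) furnishing the base case and the inclusion for the inductive step. The only quibble is an off-by-one in your indexing for $P=\vec P_0$: since $p_i$ itself is an endpoint of~$\tilde Q$, the smallest non-pendent $T_b$ already has two $\tilde Q$-vertices, so your ``$k=1$'' base case there is really $k=2$ --- but the content of your base case description is the right one and matches the paper.
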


\begin{proof}
The proof is by induction on the distance of~$b$ from the $P$-arc
or $P$-vertex of~$Q$. First, if $P=\vec P_0$ and there is an arc
$p_i\to b$ or $p_i\leftarrow b$, then by Claim~\ref{calim1} applied
to all the pendent trees in~$\S_{p_i}$ and the corresponding
restrictions of~$h$ we get $g(h(p_i))^{T}=1$ for every $T\in\S_{p_i}$.
If $p_i\to b$ in~$T_b$, then $h(p_i)\to h(b)$ in~$G$ and $g(h(p_i))\to
g(h(b))$ in~$\Omega_\T(H)$. Hence $f(h(p_i))=g(h(p_i))\bul\in
g(h(b))^{T_b}$ by~\eqref{eq:arc-1}. If, on the other hand,
$p_i\leftarrow b$ in~$T_b$, then condition~\eqref{eq:arc-2} applies
analogously.

Next, let $P=\vec P_1$ and let $p_i=(a,b)$. The non-pendent tree~$T_b$
consists of the arc $(a,b)$ and all the pendent trees in~$\S_a$.
Again, by Claim~\ref{calim1} we have $g(h(a))^{T}=1$ for every
pendent $T\in\S_a$. Thus by the definition of~$f$ we have
$f(h(p_i))=f(h(a,b))\in g(h(b))^{T_b}$. The case $p_i=(b,a)$ is
analogous.

Finally, let $T_b$ contain the arc~$(a,b)$ and suppose that $a$~is
not the $P$-vertex of~$T_b$ and $(a,b)$~is not the $P$-arc of~$T_b$.
Let $T_a$ be the non-pendent subtree of~$T_b$ rooted in~$a$. By
induction, $f(h(p_i))\in g(h(a))^{T_a}$. For every pendent $T\in\S_a$,
an application of Claim~\ref{calim1} to the restriction of~$h$
to~$T$ shows that $g(h(a))^T=1$. As $a\to b$ in~$\T_b$, we have
$h(a)\to h(b)$ in~$G$ and $g(h(a))\to g(h(b))$ in~$\Omega_\T(H)$.
Hence $g(h(a))^{T_a}\subseteq g(h(b))^{T_b}$ by~\eqref{eq:arc-3}.
Therefore $f(h(p_i))\in g(h(b))^{T_b}$. Just like before, the case
$b\to a$ is analogous; condition~\eqref{eq:arc-4} applies.
\end{proof}

We aim to show that $f$~is a homomorphism. Let $y\to z$ in
$\Gamma_\T(G)$; $y$~and~$z$ may be vertices or arcs of~$G$, depending
on whether $P=\vec P_0$ or $P=\vec P_1$. Then there exists a
homomorphism $h:Q\to G$ such that $y=(h\circ \epsilon_1)[P]$ and
$z=(h\circ \epsilon_2)[P]$.  Put $p_1=\epsilon_1[P]$ and
$p_2=\epsilon_2[P]$, so that $y=h(p_1)$ and $z=h(p_2)$.

For $i=1,2$, let $h_i$ be the restriction of~$h$ to the non-pendent
tree~$T_{m,i}$ rooted in the middle vertex~$m$ of~$Q$.  By
Claim~\ref{calim2}, $f(y)=f(h_1(p_1))\in g(h_1(m))^{T_{m,1}}=
g(h(m))^{T_{m,1}}$ and $f(z)=f(h_2(p_2))\in g(h_2(m))^{T_{m,2}}=
g(h(m))^{T_{m,2}}$. By~\eqref{eq:vert}, $g(h(m))^{T_{m,1}} \Rrightarrow
g(h(m))^{T_{m,1}}$. Therefore $f(y)\to f(z)$ in~$H$. Indeed, $f$~is
a homomorphism of~$\Gamma_\T(G)$ to~$H$. 
This concludes the proof of Theorem~\ref{thm:tap}.
\end{proof}

\section{Pultr functors and homomorphism dualities}

In our final section, we return to the connection between Pultr
functors and homomorphism dualities, which enabled us to prove
Theorem~\ref{thm:tree}.

\begin{defi}
Let $\F$ be a set of digraphs and $H$ a digraph. We say that $\F$~is
a \emph{complete set of obstructions} for~$H$ or that $(\F,H)$ is
a \emph{homomorphism duality} if for any digraph~$G$,
\[ G\to H \quad \Leftrightarrow \quad \forall\, F\in\F\colon F\notto G. \]
We also say that $H$ has \emph{tree duality} if it admits a complete
set of obstructions all of whose elements are trees, and $H$~has
\emph{finite duality} if it admits a finite complete set of
obstructions.
\end{defi}

Connections between left and central Pultr functors and homomorphism
dualities were the topic of our paper~\cite{FonTar:Adjoint}. Here
we consider their relationship to the right adjoints.

\begin{theorem}
Let $\T$ be a Pultr template. Suppose that $(\F,H)$ is a homomorphism
duality.

\begin{enumerate}[(1)]

\item If there exists a digraph~$K$ such that
\begin{equation}
\label{eq:K}
\text{for any digraph $G$,} \qquad
\Gamma_\T(G) \to H \quad \Leftrightarrow \quad G \to K,
\end{equation}
then $\{\Lambda_\T(F)\colon F\in\F\}$ is a complete set of obstructions
for~$K$.
\end{enumerate}

Assume, moreover, that the template $\T$ satisfies the necessary
conditions for the existence of~$\Omega_\T$ given by
Theorem~\ref{thm:tree}.

\begin{enumerate}[(1)]\setcounter{enumi}{1}
\item If $H$ has tree duality and $K$ satisfies~\eqref{eq:K}, then
$K$ also has tree duality.

\item If $H$ has finite duality, then there does exist a digraph~$K$
that satisfies~\eqref{eq:K}. Moreover, $K$~also has finite duality.
\end{enumerate}
\end{theorem}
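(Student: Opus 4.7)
The plan is to compose the Pultr adjunction from Theorem~\ref{puthm} with the hypothesised duality $(\F,H)$; these two equivalences chain naturally through $\Gamma_\T(G)$. For part~(1), I would verify the chain
\[ G \to K \ \Leftrightarrow\ \Gamma_\T(G) \to H \ \Leftrightarrow\ (\forall F\in\F)\ F \notto \Gamma_\T(G) \ \Leftrightarrow\ (\forall F\in\F)\ \Lambda_\T(F) \notto G, \]
where the first step uses~\eqref{eq:K}, the second uses the duality $(\F,H)$, and the third uses the Pultr adjunction. This is exactly the statement that $\{\Lambda_\T(F)\colon F\in\F\}$ is a complete set of obstructions for~$K$.

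For part~(2), I would invoke the tree duality of~$H$ to choose $\F$ consisting of trees. By the necessary conditions from Theorem~\ref{thm:tree}, each $\Lambda_\T(F)$ is then homomorphically equivalent to a tree~$T_F$. Replacing each $\Lambda_\T(F)$ by~$T_F$ in the complete set of obstructions supplied by~(1) yields a tree complete set of obstructions for~$K$, since homomorphically equivalent digraphs have the same no-homomorphism relation with any given~$G$.

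The real work lies in part~(3), where $K$ must be produced from scratch (no a~priori $\Omega_\T(H)$ is assumed to exist). I would appeal to the classical finite duality theorem (Komárek~\cite{Kom:Phd}, Nešetřil--Tardif~\cite{nestar1}): a digraph has finite duality if and only if it admits a finite complete set of obstructions that are trees, and conversely every finite set of trees admits a dual digraph. So, first choose such a finite tree set~$\F$ for~$H$; by the necessary conditions, each $\Lambda_\T(F)$ is homomorphically equivalent to a tree~$T_F$, hence the finite set $\{T_F\colon F\in\F\}$ admits a dual digraph~$K$ by the cited result. Then $G\to K$ iff $(\forall F\in\F)\ \Lambda_\T(F)\notto G$, and running the chain of~(1) in reverse confirms~\eqref{eq:K}; by construction, $K$~itself has finite duality (witnessed by $\{T_F\colon F\in\F\}$, or equivalently by $\{\Lambda_\T(F)\colon F\in\F\}$). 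The main obstacle is simply locating the right form of the classical duality theorem for digraphs to guarantee the existence of~$K$; the adjunction-and-duality chaining is automatic bookkeeping.
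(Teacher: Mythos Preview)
Your proposal is correct and follows essentially the same approach as the paper: the same chain of equivalences for~(1), the same replacement-by-tree-cores for~(2), and the same appeal to the Ne\v set\v ril--Tardif duality theorem to manufacture~$K$ in~(3). Your write-up of~(3) is in fact slightly more explicit than the paper's in spelling out that a finite dual can always be witnessed by a finite set of trees, which is what licenses applying Theorem~\ref{thm:tree} to the obstructions.
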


\begin{proof}
(1) For any digraph $G$,
\begin{align*}
G\to K & \Leftrightarrow \Gamma_\T(G) \to H \\
& \Leftrightarrow \forall\, F\in\F\colon F\notto \Gamma_\T(G) \\
& \Leftrightarrow \forall\, F\in\F\colon \Lambda_\T(F)\notto G .
\end{align*}

(2) Let $\F$ be a complete set of obstructions for~$H$ consisting
entirely of trees. Then $\Lambda_\T(F)$~is homomorphically equivalent
to a tree~$\Lambda'(F)$ for each $F\in\F$. Since for any digraph~$G$
we have $\Lambda_\T(F)\to G$ iff $\Lambda'(F)\to G$, the set
$\F'=\{\Lambda'(F)\colon F\in\F\}$ is a complete set of tree obstructions
for~$K$.

(3) Let $\F'=\{\Lambda'(F)\colon F\in\F\}$ as above. By~\cite{NesTar:Dual},
there exists a digraph~$K$ such that $(\F',K)$ is a homomorphism
duality. By the above equivalence, $K$~satisfies~\eqref{eq:K}.
\end{proof}

Thus for {\em all} the templates $\T$ satisfying the necessary
hypotheses in Theorem~\ref{thm:tree}, the central Pultr functor
$\Gamma_{\T}$ admits a partial right adjoint~$\Omega_{\T}$, defined
at least on some subclass~$\mathcal{D}$ of the class of all digraphs:
namely, we can take the class of all digraphs with finite duality.

Furthermore, combining Theorem~\ref{thm:tap}, compositions and
sporadic examples as in Section~\ref{sec:compo}, we get a wide class
of templates satisfying the necessary hypotheses in Theorem~\ref{thm:tree}
for which the central Pultr functor admits a right adjoint.  This
gives reason to think that the converse of Theorem~\ref{thm:tree}
might hold. While known constructions of finite duals of trees are
rather complex, a right adjoint of a central Pultr functor is even
more general. This fact justifies the complexity of the construction
given in Section~\ref{sec:tree}.


\end{document}